\DeclareMathOperator{\rank}{rank}
\DeclareMathOperator{\cone}{cone.hull}
\DeclareMathOperator{\conv}{conv.hull}
\DeclareMathOperator{\width}{width}
\DeclareMathOperator{\vertes}{vert}
\DeclareMathOperator{\size}{size}
\DeclareMathOperator{\poly}{poly}
\DeclareMathOperator{\mult}{mult}
\DeclareMathOperator{\inter}{int}
\DeclareMathOperator{\parl}{par}
\DeclareMathOperator{\vol}{Vol}
\begin{document}

\title{FPT-algorithms for some problems related to integer programming}

%\titlerunning{Short form of title}        % if too long for running head

\author{D.~V.~Gribanov \and D.~S.~Malyshev \and P.~M.~Pardalos \and S.~I.~Veselov}

%\authorrunning{Short form of author list} % if too long for running head

\institute{D.~V.~Gribanov \at Lobachevsky State University of Nizhny Novgorod, 23 Gagarina Avenue, Nizhny Novgorod, 603950, Russian Federation\\
National Research University Higher School of Economics, 25/12 Bolshaja Pecherskaja
Ulitsa, Nizhny Novgorod, 603155, Russian Federation\\
\email{dimitry.gribanov@gmail.com}
\and D.~S.~Malyshev \at National Research University Higher School of Economics, 25/12 Bolshaja Pecherskaja
Ulitsa, Nizhny Novgorod, 603155, Russian Federation\\
\email{dsmalyshev@rambler.ru}
\and P.~M.~Pardalos \at University of Florida, 401 Weil Hall, P.O. Box 116595,
Gainesville, FL 326116595, USA
\at National Research University Higher School of Economics, 25/12 Bolshaja Pecherskaja
Ulitsa, Nizhny Novgorod, 603155, Russian Federation\\
\email{p.m.pardalos@gmail.com}
\and S.~I.~Veselov \at Lobachevsky State University of Nizhny Novgorod, 23 Gagarina Avenue, Nizhny Novgorod, 603950, Russian Federation\\
\email{sergey.veselov@itmm.unn.ru}
}

\date{Received: date / Accepted: date}
% The correct dates will be entered by the editor

\maketitle

\begin{abstract}
In this paper, we present fixed-parameter tractable algorithms for special cases of the shortest lattice vector, integer linear programming, and simplex width computation problems, when matrices included in the problems' formulations are near square. The parameter is the maximum absolute value of the rank minors in the corresponding matrices. Additionally, we present fixed-parameter tractable algorithms with respect to the same parameter for the problems, when the matrices have no singular rank submatrices.

\keywords{Integer Programming \and Shortest Lattice Vector Problem \and Matrix Minors \and FPT-algorithm \and Lattice Width}
% \PACS{PACS code1 \and PACS code2 \and more}
% \subclass{MSC code1 \and MSC code2 \and more}
\end{abstract}

\section{Introduction}
Let $A \in \mathbb{Z}^{d \times n}$ be an integer matrix. We denote by $A_{ij}$ the $ij$-th element of the matrix, by $A_{i*}$ its $i$-th row, and by $A_{*j}$ its $j$-th column. The set of integer values starting from $i$ and ending in $j$ is denoted by $i:j=\left\{i, i+1, \ldots, j\right\}$. Additionally, for subsets $I \subseteq \{1,\dots,d\}$ and $J \subseteq \{1,\dots,n\}$, $A_{I\,J}$ denotes the submatrix of $A$ that was generated by all rows with numbers in $I$ and all columns with numbers in $J$. When $I$ or $J$ are replaced by $*$, that implies that all rows or columns (respectively) are selected. By $0_{m\times n}$, we mean
a $m\times n$-matrix with the zeroes entries only, $0$ also means the zero vector of the corresponding dimension. For example, $A_{I*}$ is the submatrix consisting of all rows in $I$ and all columns. Let $||A||_{\max}$ denote the maximum absolute value of any element in $A$. Let $\Delta_k(A)$ denote the greatest absolute value of determinants of all $k \times k$ submatrices of $A$, respectively. Additionally, let $\Delta(A) = \Delta_{\rank(A)}(A)$.

\begin{definition}
For a vector $b\in\mathbb{Z}^{n}$, by $P(A,b)$ we denote a polyhedron $\{ x \in \mathbb{R}^{n} : A x \leq b\}$. The set of all vertices of a polyhedron $P$ is denoted by $\vertes(P)$.
\end{definition}

\begin{definition}
For a matrix $B \in \mathbb{R}^{d \times n}$, $\cone(B) = \{B t : t \in \mathbb{R}_+^{n} \}$ is the \emph{cone spanned by columns of} $B$, $\conv(B) = \{B t : t \in \mathbb{R}_+^{n},\, \sum_{i=1}^{n} t_i = 1  \}$ is the \emph{convex hull spanned by columns of} $B$, $\parl(B) = \{x \in \mathbb{R}^d : x = B t,\, t \in [0,1)^n \}$ is the \emph{parallelepiped spanned by columns of} $B$, and $\Lambda(B) = \{B t : t \in \mathbb{Z}^{n} \}$ is the \emph{lattice spanned by columns of} $B$.
\end{definition}

We refer to \cite{CAS71,GRUB87,SIEG89} for mathematical introductions to lattices.

%\begin{definition}
%For $v \in vert(P(A,b))$, let $N(v) = cone(A_{J(v)\,*}^\top)$, where $J(v) = \{j :\: A_{j\,*} v = b_j \}$.
%The set $N(v)$ is called the \emph{normal cone of the vertex} $v$.
%\end{definition}

\begin{definition}
The \emph{width of a convex body} $P$ is defined as
\[
\width(P)=\min\limits_{c \in \mathbb{Z}^n\setminus\{0\}} (\max\limits_{x \in P} c^\top x - \min\limits_{x \in P} c^\top x).
\]
A vector $c$ minimizing the difference $\max\limits_{x \in P} c^\top x - \min\limits_{x \in P} c^\top x$ on $\mathbb{Z}^n\setminus\{0\}$ is called the \emph{flat direction of} $P$.
\end{definition}

\begin{definition}
Following \cite{SCHR98}, we define the sizes of an integer number $x$, a rational number $r = \frac{p}{q}$, a rational vector $v \in \mathbb{Q}^n$, and a rational matrix $A \in \mathbb{Q}^{d \times n}$ in the following way:
\begin{align*}
& \size(x) = 1 + \lceil \log_2 (x+1) \rceil,\\
& \size(r) = 1 + \lceil \log_2 (p+1) \rceil + \lceil \log_2 (q+1) \rceil,\\
& \size(v) = n + \sum_{i=1}^n \size(v_i),\\
& \size(A) = dn + \sum_{i=1}^d \sum_{j=1}^n \size(A_{i\,j}).
\end{align*}
\end{definition}

\begin{definition}
An algorithm parameterized by a parameter $k$ is called \emph{fixed-parameter tractable} (FPT-\emph{algorithm}) if its complexity can be estimated by a function from the class $f(k)\, n^{O(1)}$, where $n$ is the input size and $f(k)$ is a computable function that depends on $k$ only.
A computational problem parameterized by a parameter $k$ is called \emph{fixed-parameter tractable} (FPT-\emph{problem}) if it can be solved by a FPT-algorithm. For more information about the parameterized complexity theory, see \cite{PARAM15,PARAM99}.
\end{definition}

{\bf The shortest lattice vector problem}

The \emph{Shortest Lattice Vector Problem} (the SLVP) consists in finding $x \in \mathbb{Z}^n \setminus \{0\}$ minimizing $||H x||$, where $H \in \mathbb{Q}^{d \times n}$ is given as an input. The SLVP is known to be NP-hard with respect to randomized reductions, cf. \cite{AJTAI96}. The first polynomial-time approximation algorithm for the SLVP was proposed by A.~Lenstra, H.~Lenstra~Jr., and L.~Lov\'asz in \cite{LLL82}.  Shortly afterwards, U.~Fincke and M.~Pohst in \cite{FP83,FP85}, R.~Kannan in \cite{KANN83,KANN87} described the first exact SLVP solvers. Kannan's solver has a computational complexity of  $2^{O(n\,\log n)} \poly(\size(H))$, where $\poly(\cdot)$ means some polynomial on its argument. The first SLVP solvers that achieve the complexity $2^{O(n)} \poly(\size(H))$ were proposed by M.~Ajtai, R.~Kumar, D.~Sivakumar \cite{AJKSK01,AJKSK02}, D.~Micciancio and P.~Voulgaris \cite{MICCVOUL10}. The previously discussed SLVP solvers are used for the Euclidean norm. Recent results about SLVP-solvers for more general norms are presented in \cite{BLNAEW09,DAD11,EIS11}. The paper of G.~Hanrot, X.~Pujol, D.~Stehl\'e \cite{SVPSUR11} is a good survey about SLVP-solvers.

Recently, a novel polynomial-time approximation SLVP-solver was proposed by J.~Cheon and L.~Changmin in \cite{CHLEE15}. The algorithm is parameterized by the lattice determinant, its time-complexity and the approximation factor are the best to date for lattices with a sufficiently small determinant.

In our work, we consider only integer lattices, whose generating matrices are near square. The first aim of this paper is to present an exact FPT-algorithm for the SLVP parameterized by the lattice determinant (see Section 3). Additionally, we develop a FPT-algorithm for lattices, whose generating matrices have no singular rank submatrices. The proposed algorithms work for the $l_p$ norm for any finite $p \geq 1$ and also for the $l_\infty$ norm.

{\bf The integer linear programming problem}

The \emph{Integer Linear Programming Problem} (the ILPP) can be formulated as $\min\{ c^\top x : x \in P(H,b) \cap \mathbb{Z}^n\}$ for integer vectors $c,b$ and an integer matrix $H$.

There are several polynomial-time algorithms for solving linear programs. We mention Khachiyan's algorithm \cite{KHA80}, Karmarkar's algorithm \cite{KAR84}, and Nesterov's algorithm \cite{NN94,PAR91}. Unfortunately, it is well known that the ILPP is NP-hard, in the general case. Therefore, it would be interesting to reveal polynomially solvable cases of the ILPP. An example of this type is the ILPP with a fixed number of variables, for which a polynomial-time algorithm is given by H.~Lenstra in \cite{LEN83}. Another examples can be obtained, when we add some restrictions to the structure of constraints matrices. A square integer matrix is called \emph{unimodular} if its determinant equals $+1$ or $-1$. An integer matrix is called \emph{totally unimodular} if all its minors are $+1$ or $-1$ or $0$. It is well known that all optimal solutions of any linear program with a totally unimodular constraints matrix are integer. Hence, for any linear program and the corresponding integer linear program with a totally unimodular constraints matrix, the sets of their optimal solutions coincide. Therefore, any polynomial-time linear optimization algorithm (like the ones in \cite{KAR84,KHA80,NN94,PAR91}) is also an efficient algorithm for the ILPP.

The next natural step is to consider the \emph{totally bimodular} case, i.e. the ILPP having constraints matrices with the absolute values of all rank minors in the set $\{0, 1, 2\}$. The first paper that discovers fundamental properties of the bimodular ILPP is the paper of S.~I.~Veselov and A.~Y.~Chirkov \cite{VESCH09}. Very recently, using results of \cite{VESCH09}, a strong polynomial-time solvability of the bimodular ILPP was proved by S.~Artmann, R.~Weismantel, R.~Zenklusen in \cite{AW17}. A matrix will be called \emph{totally $\Delta$-modular} if all its rank minors are at most $\Delta$ in the absolute value.

More generally, it would be interesting to investigate the computational complexity of the problems with bounded minors constraints matrices. The maximum absolute value of rank minors of an integer matrix can be interpreted as a proximity measure to the class of totally unimodular matrices. Let the symbol ILPP$_{\Delta}$ denote the ILPP with constraints matrix, each rank minor of which has the absolute value at most $\Delta$. In \cite{SHEV96},  a conjecture is presented that for each fixed natural number $\Delta$ the ILPP$_{\Delta}$ can be solved in polynomial-time. There are variants of this conjecture, where the augmented matrices $\dbinom{c^\top}{A}$ and $(A \; b)$ are considered \cite{AZ11,SHEV96}.

Unfortunately, not much is known about the computational complexity of the ILPP$_{\Delta}$. For example, the complexity status of the ILPP$_{3}$ is unknown. A step towards deriving the its complexity was done by Artmann et al. in \cite{AE16}. Namely, it has been shown that if the constraints matrix, additionally, has no singular rank submatrices, then the ILPP$_{\Delta}$ can be solved in polynomial-time. Some results about polynomial-time solvability of the boolean ILPP$_{\Delta}$ were obtained in \cite{AZ11,BOCK14,GRIBM17}. F.~Eisenbrand and S.~Vempala \cite{EIS16} presented a randomized simplex-type linear programming algorithm, whose expected running time is strongly polynomial if all minors of the constraints matrix are bounded by a fixed constant.

In \cite{GRIB13,GRIBV16}, it has been shown that any lattice-free polyhedron of the ILPP$_{\Delta}$ has a relatively small width, i.e., the width is bounded by a function that is linear on the dimension and exponential on $\Delta$. Interestingly, due to \cite{GRIBV16}, the width of any empty lattice simplex can be estimated by $\Delta$, for this case. In \cite{GRIBC16}, it has been shown that the width of any simplex induced by a system, having the absolute values
of minors bounded by a fixed constant, can be computed by a polynomial-time algorithm. As it was mentioned in \cite{AW17}, due to E.~Tardos' results \cite{TAR86}, linear programs with constraints matrices, whose all minors are bounded by a fixed constant, can be solved in strongly polynomial time. N.~Bonifas et al. \cite{BONY14} showed that any polyhedron defined by a totally $\Delta$-modular matrix has a diameter bounded by a polynomial on $\Delta$ and the number of variables.

The second aim of our paper is to improve results of \cite{AE16}. Namely, in Section 4, we will present a FPT-algorithm for the ILPP$_{\Delta}$, when the constraints matrix is close to a square matrix, i.e. it has a fixed number of additional rows. This fact gives us a FPT-algorithm for the case, when the problem's constraints matrix has no singular rank submatrices. Indeed, such matrices can have only one additional row if the dimension is sufficiently large, due to \cite{AE16}. In this paper, we present an algorithm with a better complexity bound. Additionally, we improve some inequalities established in \cite{AE16}.\\
{\bf Computing the simplex lattice width}

A.~Seb\"{o} shown \cite{SEB99} that the problem of computing the rational simplices width is NP-hard. A.~Y.~Chirkov and D.~V.~Gribanov \cite{GRIBC16} shown that the problem can be solved by a polynomial-time algorithm in the case, when the simplex is defined by a bounded minors constraints matrix. The final aim of this paper is to present a FPT-algorithm for the simplex width computation problem (see Section 5).

\section{Some auxiliary results}

Let $H$ be a $d\times n$ matrix of rank $n$ that has already been reduced to the Hermite normal form (the HNF) \cite{SCHR98,STORH96,ZHEN05}. Let us assume, without loss of generality, that the matrix $H_B = H_{1:n\,*}$ is non-singular, and let $H_N$ be the $m \times n$ matrix generated by the remaining columns of $H$. In other words, $H = \dbinom{H_B}{H_N}$ and $d = n + m$.

Using additional permutations of rows and columns, we can transform $H$, such that the matrix $H_B$ has the following form:

\begin{equation} \label{HNF}
H_B = \begin{pmatrix}
1            & 0                   & \dots         & 0           & 0               & 0      & \dots & 0\\
0            & 1                   & \dots         & 0           & 0              &0       & \dots & 0\\
\hdotsfor{8} \\
0            &        0            & \dots         & 1           & 0           & 0          & \dots & 0\\
H_{s+1\,1}  &   H_{s+2\,2}        & \dots        & H_{s+1\,s}  & H_{s+1\,s+1} & 0           & \dots & 0\\
%H_{(s+1)\,1}  &   H_{(s+1)\,2}        & \dots        & H_{(s+1)\,(s-1)}  & H_{(s+1)\,s} & H_{(s+1)\,(s+1)} & \dots & 0\\
\hdotsfor{8} \\
H_{n\,1}  &   H_{n\,2}        & \hdotsfor{5}  & H_{n\,n}\\
\end{pmatrix},
\end{equation}

\noindent where $s$ is the number of 1's on the diagonal. Hence, $H_{i\,i} \geq 2$, for $i \in (s+1) : n$. Let, additionally, $k = n - s$ be the number of diagonal elements that are not equal to $1$, $\Delta = \Delta(A)$ and $\delta = |\det(H_B)|$.

The following properties are known for the HNF:
\begin{itemize}
\item[1)] $0 \leq H_{i\,j} < H_{i\,i}$, for any $i \in 1:n$ and $j \in 1:(i-1)$,

\item[2)] $\Delta \geq \delta = \prod_{i=s+1}^n H_{i\,i}$, and, hence, $k \leq \log_2 \Delta$,

\item[3)] since $H_{i\,i} \geq 2$, for $i \in (s+1) : n$, we have \[\sum\limits_{i=s+1}^n H_{i\,i} \leq \frac{\delta}{2^{k-1}} + 2(k-1) \leq \delta.\]
\end{itemize}

%Without loss of generality, we can assume that
%\[
%|H_{i\,j}| \leq H_{i\,i} / 2 \text{ for any } i \in 1:n \text{ and } j \in 1:(i-1).
%\]
%
%Also we have a standard equality $\Delta(H) \geq |\det(H_B)| = \prod_{i=s+1}^n H_{i\,i}$ and hence $k \leq \log_2 \Delta(H)$, where $k = n - s$ is the number of diagonal elements that are not equal $1$. Since $H_{i\,i} \geq 2$ for $i \in (s+1) : n$, we also have
%\begin{equation}\label{HNFSum}
%\sum\limits_{i=s+1}^n H_{i\,i} \leq |\det(H_B)| + 1.
%\end{equation}

In \cite{AE16}, it was shown that $||H_N||_{\max} \leq a_q$, where $q = \lceil \log_2 \Delta \rceil$, and the sequence $\{a_i\}$ is defined, for $i \in 0:q$, as follows:
\[
a_0 = \Delta,\quad a_i = \Delta + \sum_{j=0}^{i-1} a_j \Delta^{\log_2 \Delta} (\log_2 \Delta)^{(\log_2 \Delta /2)}.
\]

It is easy to see that $a_q = \Delta (\Delta^{\log_2 \Delta} (\log_2 \Delta)^{(\log_2 \Delta /2)}+1)^{\lceil \log_2 \Delta \rceil}$.

We will show that the estimate on $||H_N||_{\max}$ can be significantly improved.

\begin{lemma}\label{HNFElem}
\[||H_N||_{\max} \leq \frac{\Delta}{\delta} ( \frac{\delta}{2^{k-1}} + k -1) \leq \Delta.\]

Hence, $||H||_{\max} \leq \Delta$.
\end{lemma}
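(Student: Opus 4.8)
The plan is to bound every entry of $H_N$ by expressing the rows of $H_N$ in the basis formed by the rows of $H_B$ and then exploiting the lower-triangular HNF shape of $H_B$. Fix a row index $p \in 1:m$ and write $h = H_{n+p\,*}$ for the corresponding row of $H_N$. Since $H_B$ is non-singular, its rows form a basis of $\mathbb{R}^n$, so there are unique reals $\lambda_1,\dots,\lambda_n$ with $h = \sum_{i=1}^n \lambda_i H_{i\,*}$. By Cramer's rule, $\lambda_i = \det\bigl(H_B^{(i)}\bigr)/\det(H_B)$, where $H_B^{(i)}$ is obtained from $H_B$ by replacing its $i$-th row by $h$. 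The matrix $H_B^{(i)}$ is an $n\times n$ submatrix of $H$ (it uses the distinct rows $1,\dots,i-1,n+p,i+1,\dots,n$ and all columns), so $|\det(H_B^{(i)})| \le \Delta$ and hence $|\lambda_i| \le \Delta/\delta$ for every $i$.

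Next I would turn this coordinate expansion into a bound on the individual entries. For a column $j\in 1:n$ we have $(H_N)_{p\,j} = h_j = \sum_{i=1}^n \lambda_i H_{i\,j}$, and because $H_B$ is lower triangular the terms with $i<j$ vanish. Using property~1 in the form $0 \le H_{i\,j} \le H_{i\,i}-1$ for $i>j$, together with the coefficient bound, this gives
\[
|(H_N)_{p\,j}| \le \frac{\Delta}{\delta}\Bigl(H_{j\,j} + \sum_{i>j}(H_{i\,i}-1)\Bigr).
\]
Writing $f(j)=H_{j\,j}+\sum_{i>j}(H_{i\,i}-1)$, one checks that $f(j)-f(j+1)=H_{j\,j}-1\ge 0$, so $f$ is non-increasing, is constant on $1:(s+1)$ (where $H_{j\,j}=1$), and strictly decreases afterwards. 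Its maximum is therefore $f(1)=\sum_{i=s+1}^n H_{i\,i}-(k-1)$, which identifies the worst column and yields $||H_N||_{\max} \le \frac{\Delta}{\delta}\bigl(\sum_{i=s+1}^n H_{i\,i} - k + 1\bigr)$.

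Finally I would substitute the estimate from property~3, $\sum_{i=s+1}^n H_{i\,i} \le \frac{\delta}{2^{k-1}} + 2(k-1)$, which collapses the bound to $||H_N||_{\max} \le \frac{\Delta}{\delta}\bigl(\frac{\delta}{2^{k-1}} + k - 1\bigr)$, the first claimed inequality. For the second inequality it suffices to show $\frac{\delta}{2^{k-1}} + k - 1 \le \delta$, which follows from $\delta \ge 2^k$ (property~2), since then $\delta - \frac{\delta}{2^{k-1}} = \delta\,\frac{2^{k-1}-1}{2^{k-1}} \ge 2^k\,\frac{2^{k-1}-1}{2^{k-1}} = 2(2^{k-1}-1) \ge k-1$. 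The concluding claim $||H||_{\max}\le\Delta$ then follows by combining this with $||H_B||_{\max}\le\Delta$, which holds because every diagonal entry satisfies $H_{i\,i}\le\delta\le\Delta$ and every off-diagonal entry is strictly smaller by property~1.

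The step I expect to require the most care is the middle one: passing from the crude coefficient bound $|\lambda_i|\le\Delta/\delta$ to exactly $\frac{\delta}{2^{k-1}}+k-1$ rather than a weaker constant. The triangular structure of $H_B$ is what makes the per-entry estimate telescope correctly, and the monotonicity argument that pins the maximum of $f$ to the columns $j\le s+1$ — so that only the single largest diagonal block survives in the worst-case sum — is precisely what sharpens property~3's $2(k-1)$ into the $k-1$ appearing in the statement.
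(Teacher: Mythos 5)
Your proposal is correct and follows essentially the same route as the paper's proof: write each row of $H_N$ as $t^\top H_B$, bound the coefficients by $\Delta/\delta$ via Cramer's rule, and exploit the lower-triangular HNF structure together with property~3 to bound each entry. You merely spell out a few steps the paper leaves implicit (the monotonicity argument identifying the worst column, the verification that $\frac{\delta}{2^{k-1}}+k-1\le\delta$, and the bound $\|H_B\|_{\max}\le\Delta$), all of which check out.
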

\begin{proof} Let $h = H_{i\,*}$, for $i \in (n+1) : d$, and $h = t^\top H_B$, for some $t \in \mathbb{R}^n$. Let $H(j)$ be the matrix obtained from $H_B$ by replacing $j$-th row with row $h$. For any $j \in 1 : n$, we have $\det(H(j)) = t_j \det(H_B)$, hence, $|t_j| \leq \frac{\Delta}{\delta}$. Using the property 3) of the HNF, we have $$|H_{i\,j}| = |h_j| \leq \sum_{l=1}^n |t_l H_{l\,j}| < \frac{\Delta}{\delta} (1 + \sum_{ l = s+1}^n H_{l\,l} - k) \leq \frac{\Delta}{\delta} ( \frac{\delta}{2^{k-1}} + k -1).$$
\end{proof}

We also need the following technical lemma: \begin{lemma}\label{SubRankDet}
Let $H$ be an $(n+1) \times n$ integer matrix of rank $n$ that has already been reduced to the HNF, and it has the form \eqref{HNF}. Then $\Delta_{n-1}(H) \leq \frac{\Delta^2}{2} (1 + \log_2 \Delta)$.
\end{lemma}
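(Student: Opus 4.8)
The plan is to bound $\Delta_{n-1}(H)$ by relating every $(n-1)\times(n-1)$ minor of $H$ to the structured matrix $H_B$ in Hermite normal form and its single extra row. The key observation is that any $(n-1)\times(n-1)$ submatrix of $H$ is obtained by deleting one column (say column $c$) and one row from $H$. Since $H$ has only $n+1$ rows, the deleted row is either the extra bottom row $H_{(n+1)*}$ or one of the rows of $H_B$. I would split into these two cases and estimate the resulting determinant using the triangular structure of $H_B$ together with the bound $\|H\|_{\max} \le \Delta$ from Lemma~\ref{HNFElem}.

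First I would handle the case where we delete the extra row $h = H_{(n+1)*}$, so the submatrix lives entirely inside $H_B$. Here we delete column $c$ and some row from the $n\times n$ lower-triangular-type matrix $H_B$. Because $H_B$ is lower triangular with diagonal entries $1$ (in the first $s$ positions) and $H_{ii}\ge 2$ (in the last $k$ positions), and the off-diagonal entries satisfy $0 \le H_{ij} < H_{ii}$, a cofactor expansion lets me bound such a minor by a product of diagonal entries times a controlled factor. The crucial quantitative input is property~3) of the HNF, namely $\sum_{i=s+1}^n H_{ii} \le \delta/2^{k-1} + 2(k-1)$, which keeps the sum of the large diagonal entries small relative to their product $\delta$. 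Dividing $\delta$ by one diagonal factor and paying for the deleted column via the entry bound from Lemma~\ref{HNFElem} should produce a quantity of order $\frac{\delta}{H_{cc}}\cdot \|H\|_{\max}$, which I can then dominate by $\frac{\Delta^2}{2}(1+\log_2\Delta)$ after using $k \le \log_2 \Delta$ and $\Delta \ge \delta$.

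Next I would treat the case where the deleted row is one of the rows of $H_B$, so the retained submatrix uses the extra row $h$ together with $n-1$ rows of $H_B$. Expanding along the row $h$ and bounding each of its entries by $\|H\|_{\max}\le\Delta$ (Lemma~\ref{HNFElem}), each cofactor is an $(n-2)\times(n-2)$ minor of the triangular block, again bounded by a partial product of diagonal entries. The sum over the $n-1$ positions of $h$ is where the factor $(1+\log_2\Delta)$ enters: the number of large diagonal entries is at most $k \le \log_2\Delta$, so only logarithmically many terms can contribute the full product, while the rest are controlled by the $1$'s on the diagonal. Assembling $\|H\|_{\max}\le\Delta$, a partial product of diagonals bounded by $\delta \le \Delta$, and the logarithmic count gives the claimed bound.

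\textbf{The main obstacle} I anticipate is the careful bookkeeping in the cofactor expansions: I must show that deleting a column and a row from the triangular structure never forces me to multiply \emph{all} the large diagonal entries together with an extra factor of $\Delta$, since that would overshoot $\frac{\Delta^2}{2}(1+\log_2\Delta)$. The resolution should be that each minor omits at least one row of the triangular part, so one diagonal factor is always dropped; combined with property~3) controlling $\sum H_{ii}$ and the bound $k \le \log_2\Delta$ on the count of nontrivial diagonal entries, the sum of surviving products stays within $\frac{\Delta}{2}(1+\log_2\Delta)$, and the single factor $\|H\|_{\max}\le\Delta$ from the deleted column or the extra row supplies the remaining $\Delta$.
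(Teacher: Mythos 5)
There is a concrete error at the very start of your plan: an $(n-1)\times(n-1)$ submatrix of the $(n+1)\times n$ matrix $H$ is obtained by deleting \emph{two} rows and one column, not one row and one column. Both of your cases therefore describe non-square arrays (removing one row and one column of $H$ leaves an $n\times(n-1)$ matrix), and the case split has to be redone: either both deleted rows lie in $H_B$, so the minor uses the extra row $H_{(n+1)\,*}$ together with $n-2$ rows of $H_B$, or one deleted row is the extra row, so the minor sits entirely inside $H_B$ with one row and one column removed. This is not merely cosmetic, because the structural fact that drives the estimate depends on the deletion count: the paper observes that after removing two rows and one column the result is a lower triangular matrix with at most one additional diagonal, so a Laplace expansion along the first row yields at most $2^k$ nonzero terms, each of absolute value at most $|d_1\cdots d_{k-1}c|$, where $d_1,\dots,d_k$ are the nontrivial diagonal entries of $H_B$ and $c$ is either $d_k$ or an entry of the extra row.

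Apart from this, your ingredients coincide with the paper's: the triangular HNF structure, the entry bound from Lemma~\ref{HNFElem} (more precisely $|c|\le\frac{\Delta}{\delta}(\frac{\delta}{2^{k-1}}+k-1)$, not just $|c|\le\Delta$), the observation that dropping the factor $d_k\ge 2$ gives $|d_1\cdots d_{k-1}|\le\delta/2$, and $k\le\log_2\Delta$. The paper assembles these as $|\det A|\le 2^{k-1}\Delta(\frac{\delta}{2^{k-1}}+k-1)\le\frac{\delta\Delta}{2}(1+\log_2\delta)$. Your accounting of where the factor $(1+\log_2\Delta)$ comes from (``only logarithmically many terms of the expansion contribute the full product'') differs from the paper's, where it arises from the additive term $k-1$ in the bound on $|c|$ combined with $2^{k-1}\le\delta/2$; as stated your version is only heuristic, and once the row count is fixed you would still need to carry out this bookkeeping explicitly to land under $\frac{\Delta^2}{2}(1+\log_2\Delta)$ rather than, say, $\Delta^2(1+\log_2\Delta)$.
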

\begin{proof}
Let the matrix $A$ be obtained from $H$ by deleting any two rows and any column. It is easy to see that $A$ is a lower triangular matrix with at most one additional diagonal. We can expand the determinant of $A$ by the first row, using the Laplace theorem. Then, $|\det(A)| \leq 2^k |d_1 d_2 \dots d_{k-1} c|$, where $k$ is the number of non-zero diagonal elements in $H_B$, $\{d_1,d_2,\dots,d_{k}\}$ is the sequence of diagonal elements resp., and $c = d_k$ or $c$ is some element of the last row of $H$. Since $|d_k| \geq 2$, we have $|d_1 d_2 \dots d_{k-1}| \leq \delta/2$. Lemma \ref{HNFElem} provides us with an estimate on $|c|$.
Finally, we have
\[
|\det(A)| \leq 2^{k-1} \Delta ( \frac{\delta}{2^{k-1}} + k -1) \leq \frac{\delta \Delta}{2} (1 + \log_2 \delta).
\]
\end{proof}

Let the matrix $H$ have the additional property, such that $H$ has no singular $n \times n$ submatrices. One result of \cite{AE16}  states that if $n \geq f(\Delta)$, then the matrix $H$ has at most $n+1$ rows, where $f(\Delta)$ is a function that depends on $\Delta$ only. The paper \cite{AE16} contains a super-polynomial estimate on the value of $f(\Delta)$. Here, we will show the existence of a polynomial estimate.

\begin{lemma}\label{NRowsHNF}
If $n > \Delta  (2 \Delta +1)^2 + \log_2 \Delta$, then $H$ has at most $n+1$ rows.
\end{lemma}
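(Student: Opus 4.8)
The plan is to argue by contradiction: assume $H$ has at least $n+2$ rows and exhibit a singular $n\times n$ submatrix, contradicting the hypothesis that $H$ has none. After the reorderings fixed in \eqref{HNF} the first $n$ rows form the nonsingular block $H_B$ with $|\det(H_B)| = \delta$, and I single out two of the remaining rows, say $h^{(1)} = H_{n+1\,*}$ and $h^{(2)} = H_{n+2\,*}$.

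The key device is a minor identity. For each $p \in 1:n$ and $i \in \{1,2\}$, let $H^{(i)}(p)$ denote $H_B$ with its $p$-th row replaced by $h^{(i)}$; this is an $n\times n$ submatrix of $H$, so by hypothesis $m^{(i)}_p := \det\bigl(H^{(i)}(p)\bigr)$ is a nonzero integer with $|m^{(i)}_p| \le \Delta$. Writing $h^{(i)} = \sum_{l=1}^n t^{(i)}_l (H_B)_{l\,*}$, multilinearity of the determinant in rows $p$ and $q$ shows that, for $p \ne q$, the $n \times n$ submatrix obtained from $H_B$ by replacing rows $p$ and $q$ with $h^{(1)}$ and $h^{(2)}$ has determinant $\bigl(t^{(1)}_p t^{(2)}_q - t^{(1)}_q t^{(2)}_p\bigr)\det(H_B)$; since $t^{(i)}_l = m^{(i)}_l/\delta$, this equals $\bigl(m^{(1)}_p m^{(2)}_q - m^{(1)}_q m^{(2)}_p\bigr)/\delta$. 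As this submatrix is again a genuine $n\times n$ submatrix of $H$, and hence non-singular, the bracket must be nonzero; that is, the integer vectors $v_p := (m^{(1)}_p, m^{(2)}_p)$ are pairwise non-proportional for $p \in 1:n$.

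I then finish by a pigeonhole count. Each $v_p$ lies in $\bigl(\{-\Delta,\dots,\Delta\}\setminus\{0\}\bigr)^2$, a set of $(2\Delta)^2 < (2\Delta+1)^2$ elements, and pairwise non-proportionality forces the $v_p$ to be, in particular, pairwise distinct. Hence $n \le (2\Delta)^2$, and since $\Delta(2\Delta+1)^2 + \log_2\Delta \ge (2\Delta)^2$, the hypothesis $n > \Delta(2\Delta+1)^2 + \log_2\Delta$ already rules out the existence of two extra rows; therefore $H$ has at most $n+1$ rows.

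The only delicate point is the determinant identity together with the verification that every matrix invoked is an honest $n\times n$ submatrix of $H$, so that the no-singular-submatrix hypothesis applies; the rest is bookkeeping, and in fact the argument yields the stronger threshold $n > (2\Delta)^2$, so the stated bound leaves ample room. If one prefers to remain closer to the HNF structure exploited elsewhere, one may instead restrict attention to the index set $1:s$ and bound the entries of the extra rows directly by $\Delta$ via Lemma~\ref{HNFElem}; it is the use of property 2), namely $k \le \log_2\Delta$, that reintroduces the additive $\log_2\Delta$ term appearing in the statement.
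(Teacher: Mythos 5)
Your proof is correct, but it takes a genuinely different route from the paper's. The paper argues via the HNF column structure: deleting two rows $i,j\le s$ from the $(n+2)\times n$ matrix reduces the corresponding $n\times n$ minor to a $(k+2)\times(k+2)$ determinant whose first two columns are columns $i$ and $j$ of the bottom block; non-singularity forces these $s$ columns to be pairwise distinct, while HNF property 1) and Lemma~\ref{HNFElem} bound the number of admissible columns by $\Delta(2\Delta+1)^2$, and the pigeonhole closes the argument (the additive $\log_2\Delta$ absorbing $k=n-s$). You instead pigeonhole over the $n$ row positions of $H_B$: the Cramer coefficients $m^{(i)}_p$ are themselves $n\times n$ minors of $H$, hence nonzero integers of absolute value at most $\Delta$, and the two-row-replacement identity $\det = (m^{(1)}_p m^{(2)}_q - m^{(1)}_q m^{(2)}_p)/\delta$ forces the pairs $(m^{(1)}_p,m^{(2)}_p)$ to be pairwise non-proportional, hence distinct, so $n\le(2\Delta)^2$. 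Your argument is representation-free (it needs only a non-singular $n\times n$ block, not the HNF form or Lemma~\ref{HNFElem}), yields the sharper threshold $n>(2\Delta)^2$ with no $\log_2\Delta$ term, and the non-proportionality could even be pushed to roughly $2\Delta(\Delta+1)$ by counting lines through the origin; the paper's version stays parallel to the argument of Artmann et al.\ that it is explicitly refining, which is its main virtue. The only point to state explicitly is that your replaced matrices are row permutations of honest $n\times n$ submatrices of $H$, which affects only signs and so neither the non-vanishing nor the bound $\Delta$ — you acknowledge this, and it is routine.
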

\begin{proof} Our proof of the theorem has the same structure and ideas as in \cite{AE16}. We employ Lemma \ref{HNFElem} with a slight modification.

Let the matrix $H$ be defined as illustrated in \eqref{HNF}. Recall that $H$ has no singular $n \times n$ submatrices. For the purpose of deriving a contradiction, assume that $n > \Delta  (2 \Delta +1)^2 + \log_2 \Delta$ and $H$ has exactly $n+2$ rows. Let again, as in \cite{AE16}, $\bar H$ be the submatrix of $H$ without rows indexed by numbers $i$ and $j$, where $i,j \leq s$ and $i > j$. Observe, that
\[
|\det \bar H| = |\det \underbrace{\begin{pmatrix}
H_{s+1\,i} & H_{s+1\,j}& H_{s+1\,s+1} &                   &    \\
\vdots   &\vdots    &              &   \ddots       &    \\
H_{n\,i}& H_{s\,j}&       \hdotsfor{2}          & H_{n\,n} \\
H_{n+1\,i}& H_{n+1\,j} &      \hdotsfor{2}          & H_{n+1\,n} \\
H_{n+ 2\,i}& H_{n+2\,j} &      \hdotsfor{2}          & H_{n+2\,n} \\
\end{pmatrix}}_{:={\bar H}^{ij}}|.
\]

The matrix ${\bar H}^{ij}$ is a non-singular $(k+2)\times(k+2)$-matrix. This implies that the first two columns of ${\bar H}^{ij}$ must be different, for any $i$ and $j$. By Lemma \ref{HNFElem} and the structure of the HNF, there are at most $\Delta \cdot  (2 \Delta +1)^2$ possibilities to choose the first column of ${\bar H}^{ij}$. Consequently, since $n > \Delta (2 \Delta +1)^2 + \log_2 \Delta$, then $s > \Delta (2 \Delta +1)^2$, and there must exist two indices $i \not= j$, such that $\det {\bar H}^{ij} = 0$. This is a contradiction.
\end{proof}

\section{A FPT-algorithm for the shortest lattice vector problem}

Let $H \in \mathbb{Z}^{d \times n}$. The SLVP related to the $l_p$ norm can be formulated as follows:

\begin{equation}\label{ISVP}
\min\limits_{x \in \Lambda(H) \setminus \{0\} } ||x||_p,
\end{equation} or equivalently
\begin{align*}
&||x||_p \to \min\\
&\begin{cases}
x = H t \\
t \in \mathbb{Z}^n \setminus \{0\}.
\end{cases}
\end{align*}

Since there is a polynomial-time algorithm to compute the HNF, we can assume that $H$ has already been reduced to the form \eqref{HNF}.

\begin{theorem}\label{SimpleSVP}
If $n > \Delta (2 \Delta + 1)^m + \log_{2} \Delta$, then there exists a polynomial-time algorithm to solve the problem \eqref{ISVP} with a bit-complexity of $O(n \log n \cdot \log \Delta (m + \log \Delta))$.
\end{theorem}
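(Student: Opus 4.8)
The plan is to exploit the fact that, after the reduction to the form \eqref{HNF}, the lattice $\Lambda(H)$ sits in $\mathbb{R}^d$ but only its rows $s+1,\dots,n+m$ carry nontrivial information: the first $s$ rows of $H_B$ form the identity, while $k=n-s\le\log_2\Delta$ and $m$ is fixed. I would therefore search for a very short vector of the form $v=H(e_i-e_j)=H_{*i}-H_{*j}$ with $i,j\le s$ and $i\ne j$. For such a pair the first $s$ coordinates of $v$ are $\delta_{li}-\delta_{lj}$ (a single $+1$ and a single $-1$), so if the two columns also agree on every row $l>s$, then $v$ has exactly two nonzero entries, both $\pm 1$; hence $\|v\|_p=2^{1/p}$ and $\|v\|_\infty=1$.

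The core step is a pigeonhole argument producing such a pair. For a column $j\le s$ call its restriction to rows $s+1,\dots,n+m$ its \emph{tail}. The triangular part of the tail, the entries $H_{ij}$ with $i\in(s+1):n$, satisfies $0\le H_{ij}<H_{ii}$ by property 1) of the HNF, so there are at most $\prod_{i=s+1}^n H_{ii}=\delta$ distinct triangular parts. The $H_N$-part, the entries in rows $n+1,\dots,n+m$, is bounded by $\|H_N\|_{\max}\le\Delta$ via Lemma~\ref{HNFElem}, giving at most $(2\Delta+1)^m$ possibilities. Thus there are at most $\delta(2\Delta+1)^m\le\Delta(2\Delta+1)^m$ distinct tails. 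Since $k\le\log_2\Delta$ we have $s=n-k\ge n-\log_2\Delta$, and the hypothesis $n>\Delta(2\Delta+1)^m+\log_2\Delta$ yields $s>\Delta(2\Delta+1)^m$; pigeonhole then forces two columns $i\ne j$ among the first $s$ to share a tail, and the associated $v$ is the desired vector of norm $2^{1/p}$ (resp. $1$ in the $l_\infty$ case).

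It remains to certify optimality. Since $H$ is integral, $\Lambda(H)\subseteq\mathbb{Z}^d$ and every nonzero lattice vector has $l_\infty$-norm at least $1$; hence in the $l_\infty$ case $v$, of norm $1$, is already shortest. For finite $p$ a minimizer $x^*$ obeys $\|x^*\|_p\le\|v\|_p=2^{1/p}$. When $p>1$ this gives $\|x^*\|_\infty\le\|x^*\|_p<2$, so $x^*\in\{-1,0,1\}^d$ has at most two nonzero coordinates; when $p=1$ the bound $\|x^*\|_1\le 2$ likewise leaves only vectors with at most two $\pm 1$ entries or a single $\pm 2$ entry. In every case nothing beats a unit vector of norm $1$ or the support-two vector $v$, so it suffices to decide whether some $\pm e_l\in\Lambda(H)$. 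A back-substitution in \eqref{HNF} shows this is possible only for $l\le s$ and depends solely on the tail of column $l$, so each distinct tail isolated above is tested once; a unit vector is returned if found, otherwise $v$ is optimal.

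Finally, after the HNF reduction the whole computation amounts to sorting the $n$ columns by their tails, which are strings of length $k+m=O(m+\log\Delta)$ with entries of bit-size $O(\log\Delta)$; this costs $O(n\log n)$ comparisons of cost $O(\log\Delta\,(m+\log\Delta))$ each, that is $O(n\log n\cdot\log\Delta\,(m+\log\Delta))$ bit operations, and the unit-vector test fits inside this budget. I expect the main obstacle to be not the pigeonhole itself but making the count of distinct tails meet the stated threshold exactly, which is precisely the role of Lemma~\ref{HNFElem}, together with pinning down \emph{exact} optimality rather than mere shortness, in particular the boundary case of a unit vector for finite $p$.
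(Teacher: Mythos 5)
Your proposal is correct and follows essentially the same route as the paper: pigeonhole on the tails (rows below $s$) of the first $s$ columns using $k\le\log_2\Delta$, property 1) of the HNF and Lemma~\ref{HNFElem} to bound the number of distinct tails by $\Delta(2\Delta+1)^m$, then sort the columns in $O(n\log n)$ comparisons and reduce the norm-$1$ case to detecting a zero tail. Your explicit optimality certification (integrality of $\Lambda(H)$ forcing the minimum to be $1$ or $2^{1/p}$, and the back-substitution showing $\pm e_l\in\Lambda(H)$ iff column $l\le s$ has zero tail) spells out what the paper dismisses as ``easy to see,'' but it is the same argument.
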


\begin{proof}
%If $n > \Delta (2 \Delta + 1)^m + \log_2 \Delta$, then $s > \Delta (2 \Delta + 1)^m$.
Since $n = s + k$ and $k \leq \log_2 \Delta$, we have $s > \Delta (2 \Delta + 1)^m$. Consider the matrix $\bar H = H_{*\,1:s}$ that consists of the first $s$ columns of the matrix $H$. By Lemma \ref{HNFElem}, there are strictly less than $\Delta \cdot (2 \Delta + 1)^m$ possibilities to generate a column of $\bar H$, so if $s > \Delta (2 \Delta + 1)^m$, then $\bar H$ has two equivalent columns. Hence, the lattice $\Lambda(H)$ contains the vector $v$, such that $||v||_p = \sqrt[p]{2}$ (and $||v||_\infty = 1$). We can find equivalent rows of $\bar H$, using any sorting algorithm with the number of lexicographical comparisons $O(n \log n)$, where a bit-complexity of the two vectors lexicographical comparison operation is of $O(\log \Delta (m + \log \Delta))$. Finally, it is easy to see that the lattice $\Lambda(H)$ contains a vector of the $l_p$ norm $1$ (for $p \not= \infty$) if and only if the matrix $\bar H$ contains the zero column.
%Definitely, let $\bar H$ have no zero columns and let $u$ be the vector of the norm $1$ induced by the lattice $\Lambda(H)$. Then, $u \in H_{*\,i} + H_{*\,(k+1):n} t$, for the integer, non-zero vector $t$ and $i \in 1:k$. Let $j \in 1:(n-k)$ be the first index, such that $t_j \not= 0$. Since $H_{j\,j} > H_{j\,i}$, we have $u_j \not= 0$ and $||u||_p \geq  \sqrt[p]{2}$, this is a contradiction.
\end{proof}

In the case, when $m = 0$ and $H$ is a square non-singular matrix, we have the following trivial corollary:
\begin{corollary}
If $n \geq \Delta +\log_2{\Delta}$, then there exists a polynomial-time algorithm to solve problem \eqref{ISVP} with a bit-complexity of $O(n\log{n}\cdot \log^2{\Delta})$.
\end{corollary}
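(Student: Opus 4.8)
The plan is to obtain the corollary as the square case $m = 0$ of Theorem \ref{SimpleSVP}, and then to sharpen the hypothesis from the strict inequality $n > \Delta + \log_2\Delta$ that the theorem literally yields to the non-strict $n \geq \Delta + \log_2\Delta$. The complexity part is free: substituting $m = 0$ into the bound $O(n \log n \cdot \log\Delta\,(m + \log\Delta))$ gives exactly $O(n \log n \cdot \log^2\Delta)$. So the real content is to show that the boundary value $n = \Delta + \log_2\Delta$ already suffices to recover the shortest vector, and to confirm correctness in the square setting.

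First I would reduce $H$ to the HNF form \eqref{HNF} in polynomial time and record $s$, $k = n - s$, and $\delta = \prod_{i=s+1}^n H_{i\,i}$. By properties 2) and 3) of the HNF we have $\delta \leq \Delta$ and $k \leq \log_2 \delta \leq \log_2 \Delta$, so the hypothesis gives $s = n - k \geq (\Delta + \log_2\Delta) - \log_2\Delta = \Delta \geq \delta$. Since $m = 0$, each column $H_{*\,j}$ with $j \leq s$ equals $e_j$ in its top $s$ coordinates; hence two such columns $H_{*\,a}$ and $H_{*\,b}$ agree on the lower block (rows $(s+1):n$) if and only if $H(e_a - e_b)$ has exactly two nonzero entries, both $\pm 1$, i.e. $||H(e_a - e_b)||_p = \sqrt[p]{2}$.

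Next I would count the admissible lower blocks: by property 1) of the HNF each entry $H_{i\,j}$ with $i \in (s+1):n$ and $j \leq s$ satisfies $0 \leq H_{i\,j} < H_{i\,i}$, so a lower block takes one of exactly $\delta$ values, of which one is the zero block. This gives the dichotomy. If some column $j \leq s$ has the zero lower block, then $H_{*\,j} = e_j \in \Lambda(H)$, a vector of $l_p$-norm $1$, which is optimal. Otherwise the $s \geq \delta$ columns are distributed among the $\delta - 1$ nonzero lower blocks, so the pigeonhole principle forces indices $a \neq b$ with equal lower blocks, producing a vector of norm $\sqrt[p]{2}$. For optimality in this second branch I would verify that the absence of a zero lower block forces $\Lambda(H)$ to contain no vector of norm $1$: for the lower-triangular HNF, solving $e_i = H t$ by back-substitution shows $e_i \in \Lambda(H)$ only when column $i$ lies in the top block and its lower block vanishes (the diagonal entries $H_{i\,i} \geq 2$ obstruct integrality otherwise). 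Since every nonzero integer vector has $l_p$-norm at least $1$ and the next attainable value is $\sqrt[p]{2}$ (any configuration other than a single $\pm 1$ satisfies $\sum_i |v_i|^p \geq 2$), the vector found is the shortest; for $l_\infty$ both candidate vectors have norm $1$ and are automatically optimal. The algorithm simply sorts the $s$ lower blocks, and since each block has $k \leq \log_2\Delta$ entries of bit-size $O(\log\Delta)$, the $O(n\log n)$ lexicographic comparisons cost $O(\log^2\Delta)$ each, yielding the claimed bound.

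The one delicate point is exactly the boundary case $n = \Delta + \log_2\Delta$ with $\delta = \Delta$ and $k = \log_2\Delta$, where $s$ may equal $\delta$ and the plain pigeonhole of Theorem \ref{SimpleSVP} (which needs $s > \delta$) breaks down. This is precisely where the zero-lower-block branch, together with the claim that its absence rules out any norm-$1$ lattice vector, is needed; that correctness claim is the step I expect to require the most care to state cleanly.
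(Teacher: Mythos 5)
Your proof is correct and follows the route the paper intends: the corollary is Theorem~\ref{SimpleSVP} specialized to $m=0$, with the stated bit-complexity obtained by direct substitution, and the short vector found by pigeonhole on the lower blocks of the first $s$ columns of the HNF. The paper gives no proof at all (it calls the corollary trivial), and indeed the theorem's strict hypothesis $n > \Delta + \log_2\Delta$ does not literally yield the stated non-strict bound $n \geq \Delta + \log_2\Delta$; your boundary analysis --- when $s = \delta$ either two lower blocks coincide, giving a vector of norm $\sqrt[p]{2}$, or all $\delta$ admissible blocks occur and one of them is zero, giving $e_j \in \Lambda(H)$ of norm $1$ --- correctly closes that off-by-one gap, and your back-substitution argument that a norm-one lattice vector exists only via a zero lower block supplies the justification for the ``if and only if'' remark the paper leaves unproved at the end of the theorem's proof.
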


Let $x^*$ be an optimal vector of the problem \eqref{ISVP}. The classical Minkowski's theorem in geometry of numbers states that:
\[
||x^*||_p \leq 2 \left(\frac{\det \Lambda(H)}{\vol(B_p)}\right)^{1/n},
\]
where $B_p$ is the unit sphere for the $l_p$ norm.

Using the inequalities $\det \Lambda(H) = \sqrt{\det H^\top H} \leq \Delta \sqrt{\dbinom{d}{n}} \leq \Delta \left(\cfrac{e d}{n}\right)^{n/2}$, we can conclude  that $||x^*||_p \leq 2 \sqrt{\cfrac{e d}{n}}  \sqrt[n]{\cfrac{\Delta}{\vol(B_p)}}$.

On the other hand, by Lemma \ref{HNFElem}, the last column of $H$ has the norm equals $\Delta \sqrt[p]{m+1}$. Let
\begin{equation}\label{MConst}
M = \min \Bigl\{\Delta \sqrt[p]{m+1},\, 2 \sqrt{\frac{e d}{n}}  \sqrt[n]{\frac{\Delta}{\vol(B_p)}}\Bigr\}
\end{equation} be the minimum value between these two estimates.

\begin{theorem}
There is an algorithm with a complexity of $$O( (\log \Delta + m) \cdot n^{m+1} \cdot \Delta^{m+1} \cdot M^{m+1} \cdot \mult(\log \Delta + \log n + \log M) )$$ to solve the problem \eqref{ISVP}. Since $M \leq \Delta \sqrt[p]{m+1}$ (cf. \eqref{MConst}), the problem \eqref{ISVP} parameterized by $\Delta$ is included in the FPT-complexity class, for any fixed $m$.
\end{theorem}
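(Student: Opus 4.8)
The plan is to turn the a priori bound $M$ of \eqref{MConst} into a finite search and to organise that search as a dynamic program over the $s$ ``unit'' columns of $H$, so that the exponential blow-up is confined to the $m$ extra rows and to the $\delta\le\Delta$ cosets created by the nontrivial diagonal block. Write a lattice vector of \eqref{ISVP} as $x=Ht$ with $t=(u,w)$, $u=(t_1,\dots,t_s)\in\mathbb Z^s$ and $w=(t_{s+1},\dots,t_n)\in\mathbb Z^k$, where $k\le\log_2\Delta$. Since the first $s$ rows of $H_B$ are $e_1^\top,\dots,e_s^\top$, the objective splits as
\[
\|x\|_p^p=\|u\|_p^p+\|Cu+Dw\|_p^p+\|H^{(u)}_N u+H^{(w)}_N w\|_p^p,
\]
where $C$, $D$ are the lower-left and lower-right blocks of $H_B$ (so $D$ is lower triangular with $|\det D|=\delta\le\Delta$) and $H^{(u)}_N$, $H^{(w)}_N$ are the first $s$ and last $k$ columns of $H_N$. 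At an optimal vector $x^\ast$ one has $|t_j|=|x^\ast_j|\le\|x^\ast\|_\infty\le\|x^\ast\|_p\le M$ for every $j\le s$, so it is enough to minimise over $u\in\{-M,\dots,M\}^s$ while excluding $x=0$.

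The main idea is that, for a fixed $u$, the optimal choice of $w$ depends only on the residue $Cu\bmod D\mathbb Z^k$: because $D$ is lower triangular with diagonal entries at least $2$, the shortest representative of the coset $Cu+D\mathbb Z^k$ (hence the contribution of the middle block to $\|x\|_p^p$, together with the induced $w$-contribution to the extra rows) is a function of that residue alone, and there are only $\delta\le\Delta$ residues. I would therefore build $u$ one coordinate at a time and carry as state the $m$ running partial sums $\sum_{j\le i}(H_N)_{l\,j}\,t_j$ of the $u$-contribution to the extra rows ($l=1,\dots,m$) together with the current residue of $Cu$ modulo $D\mathbb Z^k$; the $w$-contribution is then supplied at the end from the chosen coset representative. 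By Lemma~\ref{HNFElem} we have $\|H\|_{\max}\le\Delta$, so along the optimal path each partial sum stays in $[-n\Delta M,\,n\Delta M]$; thus the state space has size $O\!\big((n\Delta M)^m\cdot\Delta\big)$, each of the $\le n$ columns offers $2M+1$ choices of $t_i$, and each transition updates $m$ numbers of bit-length $O(\log\Delta+\log n+\log M)$.

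Multiplying these counts gives the announced bound $O\big((\log\Delta+m)\,n^{m+1}\Delta^{m+1}M^{m+1}\,\mult(\log\Delta+\log n+\log M)\big)$, and substituting $M\le\Delta\sqrt[p]{m+1}$ from \eqref{MConst} exhibits the running time as $f(\Delta)\,n^{O(1)}$ for every fixed $m$, giving the FPT claim. The step I expect to be the crux is the collapse of the diagonal block from $\Delta^{k}$ to a single factor $\delta\le\Delta$: I must verify that tracking only the coset $Cu\bmod D\mathbb Z^k$ rather than the full middle-coordinate vector is sound, i.e. that the minimal middle-block contribution is genuinely determined by the residue, can be precomputed for all $\delta$ cosets, and updates additively under the transitions. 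The remaining technical points---that restricting to $|t_j|\le M$ loses no optimal solution and that the extra-row partial sums never leave $[-n\Delta M,n\Delta M]$ on the optimal path---follow directly from $\|x^\ast\|_p\le M$ and Lemma~\ref{HNFElem}.
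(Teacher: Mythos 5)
Your localization of the search ($\lvert t_j\rvert\le M$ for $j\le s$, partial sums in $[-n\Delta M,n\Delta M]$) and your complexity accounting are fine, but the step you yourself flagged as the crux is exactly where the argument breaks when $m\ge 1$: the optimal completion of a partial solution is \emph{not} a function of the residue $Cu\bmod D\mathbb{Z}^k$ together with $\eta=H_N^{(u)}u$. The set of reachable middle-block values $\{Cu+Dw: w\in\mathbb{Z}^k\}$ is determined by the coset, but the $w$ realizing a given representative $v$ is $w=D^{-1}(v-Cu)$, which depends on $Cu$ itself: if $Cu_1\equiv Cu_2\ (\mathrm{mod}\ D\mathbb{Z}^k)$, the corresponding $w$'s differ by $z=D^{-1}(Cu_2-Cu_1)\ne 0$, so the extra-row contributions differ by $H_N^{(w)}z$. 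A concrete failure: take $n=2$, $s=k=m=1$, $p=1$, $H_B=\bigl(\begin{smallmatrix}1&0\\1&2\end{smallmatrix}\bigr)$, $H_N=(0\ \ 1)$, so the completion cost of $u$ is $\min_w\bigl(\lvert u+2w\rvert+\lvert w\rvert\bigr)$. For $u=1$ this is $1$ (at $w=0$), for $u=3$ it is $2$ (at $w=-1$), yet both have residue $1\bmod 2$ and $\eta=0$. Hence the completion cost is not well defined on your state space, and a DP that stores only the minimal accumulated cost per state may combine it with a completion that is unachievable for the minimizing $u$. (For $m=0$ your scheme is sound, since the completion then really is the length of the shortest vector in the coset.) A secondary point: even ``precomputing the shortest representative of each coset'' cannot be done by enumerating representatives in $[-M,M]^k$, since their number is of order $(2M+1)^k/\delta$ with $k$ as large as $\log_2\Delta$; one needs a DP over the $k$ middle coordinates for that too.

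The paper avoids the problem by running the dynamic program over the coordinates of $x_B=H_Bt$ rather than over $t$: the objective $\|x\|_p^p$ is separable in the coordinates of $x$, membership of $x_B$ in $\Lambda(H_B)$ is encoded as the group constraint $Gx_B\equiv 0\ (\mathrm{mod}\ S)$ via the Smith normal form $H_B=P^{-1}SQ^{-1}$ (the group has exactly $\delta\le\Delta$ elements, which is where the single factor $\Delta$ comes from), and $x_N$ is recovered at the end from the tracked linear forms $\eta=Rx_B=\delta x_N$. If you want to keep your block decomposition, the repair is essentially to promote the $k$ middle coordinates of $x_B$ (not of $t$) to additional DP stages, which brings you back to the paper's construction.
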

\begin{proof}

After splitting the variables $x$ into two groups $x_B$ and $x_N$ with relation to $H_B$ and $H_N$, the problem \eqref{ISVP} becomes:
\begin{align*}
& ||x||_p^p \to \min\\
&\begin{cases}
x_B - H_B t = 0\\
x_N - H_N t = 0\\
x_B \in \mathbb{Z}^n,\, x_N \in \mathbb{Z}^m\\
t \in \mathbb{Z}^n \setminus \{0\}.
\end{cases}
\end{align*}

Using the formula $t = H_B^{-1} x_B$, we can eliminate the variables $t$ from the restriction $x_N - H_N t = 0$. The restriction can be additionally multiplied by $\delta$ to become integer, where $H_B^* = \delta H_B^{-1}$ is the adjoint matrix for $B$.
\begin{align*}
& ||x||_p^p \to \min\\
&\begin{cases}
x_B - H_B t = 0\\
\delta x_N - H_N H_B^{*} x_B  = 0\\
x_B \in \mathbb{Z}^n,\, x_N \in \mathbb{Z}^m\\
t \in \mathbb{Z}^n \setminus \{0\}.
\end{cases}
\end{align*}

Finally, we transform the matrix $H_B$ into the Smith normal form (the SNF) \cite{SCHR98,STORS96,ZHEN05}, such that $H_B = P^{-1} S Q^{-1}$, where $P^{-1}$, $Q^{-1}$ are unimodular matrices and $S$ is the SNF of $H_B$. After applying the transformation $t \to Q t$, the initial problem becomes equivalent to the following problem:
\begin{align*}
& ||x||_p^p \to \min\\
&\begin{cases}
G x_B \equiv 0 (\text{mod }S)\\
R x_B  = \delta x_N\\
x_B \in \mathbb{Z}^n \setminus \{0\},\, x_N \in \mathbb{Z}^m\\
||x||_\infty \leq M,
\end{cases}
\end{align*}

where $G = P\text{ mod }S$, $R = H_N H_B^{*}$. The inequality $||x||_{\infty} \leq M$ is an additional tool to localize an optimal integer solution. We also have that $||R||_{\max} = ||H_N H_B^{*}||_{\max} \leq \Delta$.

Actually, the considered problem is the classical Gomory's group minimization problem \cite{GOM65} (cf. \cite{HU70}) with additional linear constraints. As in \cite{GOM65}, it can be solved using the dynamic programming approach.

To this end, let us define the subproblems $Prob(l,\gamma,\eta)$:
\begin{align*}
& ||x||_p^p \to \min\\
&\begin{cases}
G_{*\,1:l} x \equiv \gamma (\text{mod }S)\\
R_{*\,1:l} x  = \eta\\
x \in \mathbb{Z}^l \setminus \{0\},\\
\end{cases}
\end{align*}
where $l \in 1:n$, $\gamma \in \mathbb{Z}^n\text{ mod }S$, $\eta \in \mathbb{Z}^m$, and $||\eta||_{\infty} \leq n M \Delta$.

Let $\sigma(l,\gamma,\eta)$ be the objective function optimal value of $Prob(l,\gamma,\eta)$. When the problem $Prob(l,\gamma,\eta)$ is unfeasible, we put $\sigma(l,\gamma,\eta) = +\infty$. In the beginning, we put $\sigma(l,\gamma,\eta)=+\infty$, for all values $l$, $\gamma \not= 0$, $\eta \not= 0$ and $\sigma(l, 0, 0) =0$. Trivially, the optimum of \eqref{ISVP} is
\[
\min\limits_{\eta : ||\eta||_{\infty} \leq M} \{\sigma(n,0,\delta \eta) + ||\eta||^p_p\}.
\]

The following formula gives the relation between $\sigma(l,\cdot,\cdot)$ and $\sigma(l-1,\cdot,\cdot)$:
\[
\sigma(l,\gamma,\eta) = \min \{f(z) : |z| \leq  M\},
\] where
\[
f(z) = \begin{cases}
\sigma(l-1,\gamma,\eta),\text{ for } z = 0\\
|z|^p + [z R_{*\,l} \not= \eta] \cdot \sigma(l-1,\gamma - z G_{*\,l},\eta - z R_{*\,l}),
\end{cases}
\] where the symbol $[z R_{*\,l} \not= \eta]$ equals $1$ if and only if the condition $z R_{*\,l} \not= \eta$ is true. The value of $\sigma(1,\gamma,\eta)$ can be computed using the following formula:
\[
\sigma(1,\gamma,\eta) = \min \{|z|^p : z G_{*\,1} \equiv \gamma \,(\text{mod } S),\, z R_{*\,1} = \eta,\, 0 < |z| \leq M \}.
\]

Both the computational complexity of computing $\sigma(1,\gamma,\eta)$ and the reduction complexity of $\sigma(l,\gamma,\eta)$ to $\sigma(l-1,\cdot,\cdot)$, for all $\gamma$ and $\eta$, can be roughly estimated as:
\[
O( (\log \Delta + m) \cdot \Delta M \cdot (n M \Delta)^m \cdot \mult(\log \Delta + \log n + \log M) ).
\]
The final complexity result can be obtained by multiplying the last formula by $n$.
\end{proof}

Let us consider the special case, when all $n \times n$ submatrices of $H$ are non-singular. In this case, by Lemma \ref{NRowsHNF}, for $n > \Delta(2 \Delta + 1)^2 + \log_2 \Delta$, the matrix $H$ can have at most $n+1$ rows ($m \leq 1$), and we have the following corollary.

\begin{corollary}
Let $H$ be the matrix defined as illustrated in \eqref{HNF}. Let also $H$ have no singular $n \times n$ submatrices. If $n > \Delta (2 \Delta + 1)^2 + \log_2 \Delta$, then there is an algorithm with a complexity of $O(n \log n \cdot \log^2 \Delta)$ that solves the problem \eqref{ISVP}.
\end{corollary}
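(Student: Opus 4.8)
The plan is to derive this corollary as a direct specialization, combining Lemma~\ref{NRowsHNF} with Theorem~\ref{SimpleSVP}. First, I would invoke Lemma~\ref{NRowsHNF}: since $H$ has no singular $n \times n$ submatrices and the hypothesis supplies $n > \Delta(2\Delta+1)^2 + \log_2 \Delta$, the matrix $H$ has at most $n+1$ rows. Recalling that $d = n + m$, this forces $m \leq 1$. So the structural hypothesis on $H$ is used exactly once, to bound the number of extra rows.

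Next, I would verify that the premise of Theorem~\ref{SimpleSVP} is met for this value of $m$. The point requiring attention is the matching of the two threshold inequalities. Because $2\Delta + 1 \geq 1$, the quantity $(2\Delta+1)^m$ is monotone in $m$, so $(2\Delta+1)^m \leq (2\Delta+1)^2$ for every $m \leq 2$. In particular, for $m \leq 1$ the assumed bound $n > \Delta(2\Delta+1)^2 + \log_2 \Delta$ implies $n > \Delta(2\Delta+1)^m + \log_2 \Delta$, which is precisely the hypothesis of Theorem~\ref{SimpleSVP}. Hence that theorem applies and delivers a polynomial-time algorithm solving \eqref{ISVP} with bit-complexity $O(n \log n \cdot \log \Delta\,(m + \log \Delta))$.

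Finally, I would substitute the bound $m \leq 1$ into this complexity estimate. Since $m + \log \Delta \leq 1 + \log_2 \Delta = O(\log \Delta)$ for $\Delta \geq 2$, the factor $\log \Delta\,(m + \log \Delta)$ collapses to $O(\log^2 \Delta)$, so the overall running time becomes $O(n \log n \cdot \log^2 \Delta)$, as claimed. (The borderline square case $m = 0$ is already covered by the same bound, matching the corollary stated after Theorem~\ref{SimpleSVP}.)

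I expect no genuine obstacle: the result is a clean corollary obtained by chaining the row-count bound of Lemma~\ref{NRowsHNF} into the algorithmic guarantee of Theorem~\ref{SimpleSVP}. The only step demanding care is confirming that the exponent-$2$ threshold in the hypothesis dominates the exponent-$m$ threshold needed by Theorem~\ref{SimpleSVP}, which holds precisely because Lemma~\ref{NRowsHNF} guarantees $m \leq 1 \leq 2$.
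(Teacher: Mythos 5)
Your proposal is correct and follows exactly the paper's own route: apply Lemma~\ref{NRowsHNF} to conclude $m\leq 1$, observe that the hypothesis $n>\Delta(2\Delta+1)^2+\log_2\Delta$ then dominates the threshold $n>\Delta(2\Delta+1)^m+\log_2\Delta$ of Theorem~\ref{SimpleSVP}, and substitute $m\leq 1$ into its complexity bound to obtain $O(n\log n\cdot\log^2\Delta)$. No issues.
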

\begin{proof}
We have $n > \Delta (2 \Delta + 1)^2 + \log_2 \Delta > \Delta (2 \Delta +1)^m + \log_2 \Delta$. The last inequality meets the conditions of Theorem \ref{SimpleSVP}, and the corollary follows.
\end{proof}

\begin{note}
Due to the objective function separability, it is easy to see that the same approach is applicable for the Closest Lattice Vector problem (cf. \cite{SVPSUR11}), that can be formulated as follows:
\begin{equation*}
\min\limits_{x \in \Lambda(H) } ||x-r||_p,
\end{equation*} where $r \in \mathbb{Q}^n$. The resulting algorithm has the same complexity on $n$ and $\Delta$, and it is polynomial-time on $\size(H)$ and $\size(r)$.
\end{note}

\section{The integer linear programming problem}

Let $H \in \mathbb{Z}^{d \times n}$, $c \in \mathbb{Z}^n$, $b \in \mathbb{Z}^d$, $rank(H) = n$. Let us consider the ILPP:
\begin{equation}\label{IPP}
\max\{c^\top x : x \in P(H,b) \cap \mathbb{Z}^n \}.
\end{equation} Since there is a polynomial-time algorithm to compute the HNF, we can assume that $H$ has already been reduced to the form \eqref{HNF}.

\begin{theorem}\label{IPPT}
The problem \eqref{IPP} can be solved by an algorithm with a complexity of
\[
O( (\log \Delta + m) \cdot n^{2(m+1)} \cdot \Delta^{2(m+1)} \cdot \mult(\size(c) + \log \Delta) ).
\]
\end{theorem}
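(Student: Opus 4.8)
The plan is to mirror the dynamic-programming algorithm of the previous theorem, adapting it from the shortest-vector objective to the linear objective $c^\top x$ subject to the inequality system $Hx \le b$. As there, I would first split the constraints according to $H = \binom{H_B}{H_N}$ and replace the requirement $x \in \mathbb{Z}^n$ by a congruence coming from the Smith normal form. Writing $x = H_B^{-1}(b_B - u_B)$ with integer slacks $u_B = b_B - H_B x \ge 0$, integrality of $x$ is equivalent, via $H_B = P^{-1} S Q^{-1}$, to the group congruence $G u_B \equiv G b_B \ (\mathrm{mod}\ S)$ with $G = P \bmod S$; the remaining rows $H_N x \le b_N$ turn, after multiplication by $\delta$, into $m$ integer linear constraints in $u_B$ with coefficient matrix $R = H_N H_B^{*}$ satisfying $\|R\|_{\max} \le \Delta$ exactly as in the preceding proof; and the objective becomes the linear form $c^\top H_B^{-1} u_B$ (maximizing $c^\top x$ is minimizing this form). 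Thus \eqref{IPP} is again a Gomory group-minimization problem carrying $m$ additional linear constraints, of precisely the type solved before.

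To make this finite and to control the table size, I would solve the LP relaxation of \eqref{IPP} in polynomial time using one of the cited LP algorithms, which detects infeasibility and unboundedness and, in the bounded case, returns an optimal vertex $x^*_{\mathrm{LP}}$. A proximity argument for totally $\Delta$-modular systems then confines an integer optimum to the box $\|x - x^*_{\mathrm{LP}}\|_\infty \le M$ with $M = O(n\Delta)$, and translating by a nearby integer point reduces \eqref{IPP} to the same problem over $\{x : \|x\|_\infty \le M\}$. In the group formulation this bounds the search variable by $M$ and, since each of the $m$ linear forms has coefficients at most $\Delta$ in absolute value, bounds their partial values by $\|\eta\|_\infty \le nM\Delta$.

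With these bounds in place I would reintroduce the subproblems $Prob(l,\gamma,\eta)$ with optimal values $\sigma(l,\gamma,\eta)$, where $l \in 1:n$ indexes the processed columns, $\gamma$ ranges over the $\delta \le \Delta$ residues modulo $S$, and $\eta \in \mathbb{Z}^m$ records the partial value of the $m$ linear forms with $\|\eta\|_\infty \le nM\Delta$. The recurrence has the same shape as the one for $\sigma$ in the previous proof, except that the per-coordinate term $|z|^p$ is replaced by the linear increment of $c^\top H_B^{-1}$ and the feasibility test enforces the $\le$ direction of the $H_N$-constraints. Counting $n$ choices of $l$, at most $\Delta$ residues $\gamma$, at most $(nM\Delta)^m$ vectors $\eta$, and an $O(M)$ loop over the value $z$ of the current coordinate, each step costing $\mult(\size(c)+\log\Delta)$ because the entries of $c^\top H_B^{-1}$ have size $O(\size(c)+\log\Delta)$, and substituting $M = O(n\Delta)$, collapses the product $n\cdot\Delta\cdot(nM\Delta)^m\cdot M$ to $n^{2(m+1)}\Delta^{2(m+1)}$, which is the claimed bound; the optimal $x$ is recovered from the optimal slack vector.

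The hard part will be the localization step, since everything downstream rests on pinning the search region to an $O(n\Delta)$ box: the crux is the proximity estimate for totally $\Delta$-modular systems together with a clean treatment of the infeasible and unbounded cases, and the check that, after the translation, the reduced right-hand side keeps both the group congruence and the $H_N$-constraints integral. A secondary difficulty is that the form $c^\top H_B^{-1}$ need not be sign-definite, so the usual shortest-path bound for Gomory's group problem does not directly apply; the bound on the search variable must instead be furnished by the proximity argument, and the recurrence for $\sigma$ must be arranged so that it still returns a genuine optimum of the bounded group problem with its side constraints.
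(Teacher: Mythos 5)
Your proposal is correct and follows essentially the same route as the paper: introduce the slack variables for the $H_B$-rows, invoke the Cook--Gerards--Schrijver--Tardos proximity theorem to confine the slacks to a box of side $O(n\Delta)$, pass to the Smith-normal-form group congruence with the $m$ extra constraints $R x \leq r$, $\|R\|_{\max}\leq\Delta$, and run Gomory-style dynamic programming over states $(l,\gamma,\eta)$ with $\|\eta\|_\infty \leq n^2\Delta^2$. The paper handles the two difficulties you flag exactly as you anticipate -- the localization comes directly from the cited sensitivity theorem (stated there as $\|y\|_\infty \leq n\Delta$ after choosing the LP vertex with $H_B v = b_{1:n}$), and the non-sign-definite objective is tamed by carrying the explicit box constraint $\|x\|_\infty \leq n\Delta$ inside the recursion -- so no genuinely different ideas are involved.
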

\begin{proof}

Let $v$ be an optimal solution of the linear relaxation of the problem \eqref{IPP}. We can suppose without loss of generality that $H_B v = b_{1:n}$. As in \cite{AE16}, after an introduction of the slack variables $y \in \mathbb{Z}^n_+$, the problem \eqref{IPP} becomes:
\begin{align*}
& c^\top x \to \max\\
&\begin{cases}
H_B x + y = b_{1:n}\\
H_N x \leq b_{(n+1) : m}\\
x \in \mathbb{Z}^n,\, y \in \mathbb{Z}^n_+.
\end{cases}\\
\end{align*}

Due to the classical result of W.~Cook, A.~Gerards, A.~Schrijver, and E.~Tardos \cite{COGST86,SCHR98}, we have that
\begin{equation}\label{TardoshT}
||y||_\infty \leq n \Delta.
\end{equation}

%The classical result of W.~Cook, A.~M.~H.~Gerards, A.~Schrijver and E.~Tardos \cite{COGST86,SCHR98} states that $||v - x^*||_\infty \leq n \Delta$, where $x^*$ is an optimal solution of the integer problem. Since $y = H_B (v - x)$, we have
%\begin{equation}\label{TardoshT}
%||y||_\infty \leq n^2 \Delta^2.
%\end{equation}

Now, using the formula $x = H_B^{-1} (b_{1:n} - y)$, we can eliminate the $x$ variables from the last constraint and from the objective function:
\begin{align*}
& c^\top H_B^{-1} b_{1:n} - c^\top H_B^{-1} y \to \min\\
&\begin{cases}
H_B x + y = b_{1:n}\\
-H_N H_B^{*} y \leq \delta b_{(n+1) : m} - H_N H_B^{*} b_{1:n}\\
x \in \mathbb{Z}^n,\, y \in \mathbb{Z}^n_+,
\end{cases}
\end{align*}
where the last line was additionally multiplied by $\delta$ to become integer, and where $H_B^* = \delta H_B^{-1}$ is the adjoint matrix for $B$.

Finally, we transform the matrix $H_B$ into the SNF, such that $H_B = P^{-1} S Q^{-1}$, where $P^{-1}$, $Q^{-1}$ are unimodular matrices and $S$ is the SNF of $H_B$. After making the transformation $x \to Q x$, the initial problem becomes equivalent to the following problem:
\begin{align}
& w^\top x \to \min \label{GroupMin}\\
&\begin{cases}
G x \equiv g \,(\text{mod}\, S)\\
R x \leq r \\
x \in \mathbb{Z}_+^n,\, ||x||_{\infty} \leq n \Delta,
\end{cases}\notag
\end{align}
where $w^\top = - c^\top H_B^{-1}$, $G = P\text{ mod }S$, $g = P b_{1:n}\text{ mod }S$, $R = -H_N H_B^{*}$, and $r = \delta b_{(n+1) : m} - H_N H_B^{*} b_{1:n}$. The inequalities $||x||_{\infty} \leq n \Delta$ are additional tools to localize an optimal integer solution that follows from inequality \eqref{TardoshT}. Additionally, we have that $||R||_{\max} = ||H_N H_B^{*}||_{\max} \leq \Delta$.

Actually, the problem \eqref{GroupMin} is the classical Gomory's group minimization problem \cite{GOM65} (cf. \cite{HU70}) with an additional linear constraints. As in \cite{GOM65}, it can be solved using the dynamic programming approach. To this end, let us define the subproblems $Prob(l,\gamma,\eta)$:
\begin{align*}
& w_{1:l}^\top x \to \min\\
&\begin{cases}
G_{*\,1:l} x \equiv \gamma \,(\text{mod}\, S)\\
R_{*\,1:l} x \leq \eta \\
x \in \mathbb{Z}_+^l,
\end{cases}
\end{align*}
where $l \in 1:n$, $\gamma \in \Lambda(G)\text{ mod }S$, $\eta \in \mathbb{Z}^m$, and $||\eta||_{\infty} \leq n^2 \Delta^2$.

Let $\sigma(l,\gamma,\eta)$ be the objective function optimal value of $Prob(l,\gamma,\eta)$. When the problem $Prob(l,\gamma,\eta)$ is unfeasible, we put $\sigma(l,\gamma,\eta) = +\infty$. In the beginning, we put $\sigma(l,\gamma,\eta)=+\infty$, for all values $l$, $\gamma \not= 0$, $\eta \not= 0$. Trivially, the optimum of \eqref{IPP} is
\[
\sigma(n,g,\min\{r,\,n^2 \Delta^2 \vec 1\}).
\]

The following formula gives the relation between $\sigma(l,\cdot,\cdot)$ and $\sigma(l-1,\cdot,\cdot)$:
\[
\sigma(l,\gamma,\eta) = \min \{\sigma(l-1,\gamma - z G_{*\,l},\eta-z R_{*\,l}) +z w_l : |z| \leq n \Delta \}.
\]
The value of $\sigma(1,\gamma,\eta)$ can be computed using the following formula:
\[
\sigma(1,\gamma,\eta) = \min \{z w_1 : z G_{*\,1} \equiv \gamma \,(\text{mod } S),\, z R_{*\,1} \leq \eta,\, |z| \leq n \Delta \}.
\]

Both, the computational complexity of computing $\sigma(1,\gamma,\eta)$ and the reduction complexity of $\sigma(l,\gamma,\eta)$ to $\sigma(l-1,\cdot,\cdot)$, for all $\gamma$ and $\eta$, can be roughly estimated as:
\[
O( (\log \Delta + m) \cdot n \Delta^2 \cdot (n^2 \Delta^2)^m \cdot \mult(\log \Delta + \log n + \log ||w||_\infty) ).
\]
By Lemma \ref{SubRankDet}, $||w||_\infty \leq ||c||_1 \delta \log \delta$ and $\log ||w||_\infty = O(\log \Delta + \size(c))$. Finally, the result can be obtained multiplying the last formula by $n$.

\end{proof}

Let us consider the special case, when all $n \times n$ submatrices of $H$ are non-singular. In this case, by Lemma \ref{NRowsHNF}, for $n > \Delta(2 \Delta + 1)^2 + \log_2 \Delta$, the matrix $H$ can have at most $n+1$ rows ($m \leq 1$), and we have following corollary.

\begin{corollary}
If all $n \times n$ submatrices of $H$ are non-singular and $n > \Delta(2 \Delta + 1)^2 + \log_2 \Delta$, then the problem \eqref{IPP} can be solved by an algorithm with a complexity of
\[
O( \log \Delta \cdot n^4 \cdot \Delta^4 \cdot \mult(\size(c) + \log \Delta) ).
\]
\end{corollary}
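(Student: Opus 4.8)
The plan is to obtain this statement as a direct specialization of Theorem~\ref{IPPT}, in which the hypotheses serve only to pin down the number $m$ of additional rows of $H$. The crucial observation is that the dimension condition $n > \Delta(2\Delta+1)^2 + \log_2\Delta$ imposed in the corollary is \emph{verbatim} the hypothesis of Lemma~\ref{NRowsHNF}. Hence the first step I would take is simply to invoke that lemma.

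Since $H$ is assumed to have no singular $n\times n$ submatrices and $n$ exceeds the stated threshold, Lemma~\ref{NRowsHNF} guarantees that $H$ has at most $n+1$ rows. Recalling that $H$ has been brought to the form~\eqref{HNF} with the nonsingular block $H_B = H_{1:n\,*}$ and with $d = n + m$, this is exactly the assertion that the number of extra rows satisfies $m \leq 1$. With $m$ now bounded, the rest is purely arithmetic: I would substitute $m = 1$ into the estimate
\[
O\bigl( (\log \Delta + m) \cdot n^{2(m+1)} \cdot \Delta^{2(m+1)} \cdot \mult(\size(c) + \log \Delta) \bigr)
\]
of Theorem~\ref{IPPT}. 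This yields $O\bigl((\log\Delta + 1)\, n^4\, \Delta^4\, \mult(\size(c)+\log\Delta)\bigr)$, and absorbing the constant in the factor $\log\Delta + 1 = O(\log\Delta)$ (for $\Delta \geq 2$) produces the claimed bound $O(\log\Delta \cdot n^4 \cdot \Delta^4 \cdot \mult(\size(c)+\log\Delta))$. The degenerate case $m = 0$, in which $H$ is square and the program is solved even faster, only contributes a smaller estimate and therefore does not affect the stated worst case.

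There is essentially no genuine obstacle in this argument; it is a routine corollary. The only points that warrant any care are verifying that the dimension threshold in the statement coincides exactly with the one in Lemma~\ref{NRowsHNF}, so the lemma applies with no slack, and checking that the bound of Theorem~\ref{IPPT} is monotone in $m$ so that the case $m \le 1$ is dominated by $m = 1$. Both are immediate from inspection of the respective formulas.
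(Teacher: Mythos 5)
Your proposal is correct and matches the paper exactly: the paper also derives this corollary by applying Lemma~\ref{NRowsHNF} to conclude $m \leq 1$ and then substituting into the complexity bound of Theorem~\ref{IPPT}. Nothing further is needed.
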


\section{Simplex width computation}

Let $H \in \mathbb{Z}^{(n+1)\times n}$, $b \in \mathbb{Z}^{n+1}$, $\rank(H) = n$, and $P(H,b)$ be a simplex. Let us consider the problem of finding the width$(P(H,b))$ and a flat direction of $P(H,b)$.

The main result in \cite{GRIBC16} states that $\width(P(H,b))$ can be computed by an algorithm with a complexity of
\[
O(n^{2 \log \Delta_{n-1}(H)} \cdot \Delta(H) \cdot \Delta(H,b) \cdot \poly(n,\, \log \Delta(H,b)) ),
\] where $\Delta(H,b)$ is the maximum absolute value of $n \times n$ minors of the extended matrix $(H\,b)$.

In this section, we are going to develop an FPT-algorithm for the simplex width computation problem. Let us discuss our main tool.

Let $C \in \mathbb{Z}^{n \times n}$, $p \in \mathbb{Q}^n$, $\det(C) \not= 0$, $A \in \mathbb{Z}^{m \times n}$, $b \in \mathbb{Z}^n$, and $c \in \mathbb{Z}^n$. Suppose, for any $i \in 1:m$, one of the following equivalent conditions is true. 
\begin{align}
1)\,& \label{ConeCond1} ({A_{i\,*})}^\top \in \cone({(C^{-1})}^\top) \text{ and } c \in \cone(-{(C^{-1})}^\top),\\
2)\,& \label{ConeCond2} \quad p = \arg\min\{(A_{i\,*}) x : x \in p + \cone(C)\} = \\
&= \arg\max\{c^\top x : x \in p + \cone(C)\}, \notag \\
3)\,& \label{ConeCond3} \quad c^\top C \leq 0\text{ and }AC \geq 0_{m \times n}.
\end{align}

Let us consider the following problem that depends on the input vectors and the matrices $p,\,C,\,A,\,b,\,c$ with the conditions \eqref{ConeCond1}--\eqref{ConeCond3}.
\begin{align}
&c^\top x \to \max \label{ConeProg} \\
&\begin{cases}
x \in p + \cone(C) \\
x \in P(A,b) \cap \mathbb{Z}^n \\
\end{cases}\notag
\end{align}

%In \cite{GRIBC16}, the following lemma about solvabily was proved:
The following lemma was proved in \cite{GRIBC16}, and it gives an algorithm for the problem \eqref{ConeProg}. 
\begin{lemma}\label{ConeProgLmOld}
There is an algorithm with a complexity of
\[
O(n^{2 \log \Delta(C)} \cdot \poly(n,\, \log \Delta(C),\, \size(A),\, \log ||b||_\infty,\, \log ||c||_\infty) )
\] to solve the problem \eqref{ConeProg}.
\end{lemma}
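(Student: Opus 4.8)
The plan is to turn the three equivalent hypotheses \eqref{ConeCond1}--\eqref{ConeCond3} into a pure sign structure and thereby collapse the whole problem to a finite enumeration over the integer points of a single fundamental parallelepiped. First I would pass to the cone coordinates $t = C^{-1}(x-p)$, so that $x \in p + \cone(C)$ holds precisely when $t \ge 0$. Reading condition \eqref{ConeCond3} in these coordinates, the objective becomes $c^\top x = c^\top p + (c^\top C)\,t$ with $c^\top C \le 0$, while the constraints become $Ax = Ap + (AC)\,t$ with $AC \ge 0_{m\times n}$. Hence, as one moves deeper into the cone (larger $t \ge 0$), the objective can only decrease and every row of $Ax$ can only increase. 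This simultaneous monotonicity is the structural heart of the argument: advancing into the cone is bad both for optimality and for feasibility.

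Next I would invoke the standard decomposition of the integer points of a simplicial cone. Every $x \in \mathbb{Z}^n \cap (p+\cone(C))$ can be written uniquely as $x = q + C z$, where $z = \lfloor C^{-1}(x-p)\rfloor \in \mathbb{Z}^n_{\ge 0}$ and $q = x - Cz \in (p + \parl(C)) \cap \mathbb{Z}^n$, and conversely each such pair $(q,z)$ yields an integer point of the cone. Since $C$ is integral and nonsingular, the translate $p + \parl(C)$ contains exactly $|\det C| = \Delta(C)$ points of $\mathbb{Z}^n$, one in each coset of $\Lambda(C)$. Combining this with the monotonicity: for a fixed $q$ the value $c^\top(q + Cz) = c^\top q + (c^\top C)z$ is maximized at $z = 0$, while $A(q + Cz) = Aq + (AC)z \ge Aq$ makes $Aq \le b$ necessary for any point of that coset to be feasible. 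Thus a coset contributes nothing unless its representative $q$ is already feasible, in which case $q$ is its best point. Therefore the optimum of \eqref{ConeProg} equals $\max\{c^\top q : q \in (p+\parl(C))\cap\mathbb{Z}^n,\ Aq \le b\}$, an enumeration over only $\Delta(C)$ candidates.

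The algorithm then computes the Smith normal form $C = U S V$ with $U,V$ unimodular and $S = \mathrm{diag}(s_1,\dots,s_n)$; since $\prod_i s_i = \Delta(C)$ and every nontrivial invariant factor is $\ge 2$, at most $\log_2 \Delta(C)$ of them exceed $1$, exactly as in property 2) of the HNF. The $\Delta(C)$ cosets of $\Lambda(C)$ are enumerated as $\{Uj : j_i \in 0:(s_i-1)\}$, and for each coset its unique representative $q \in p + \parl(C)$ is recovered by reducing modulo the columns of $C$ in $\poly(n,\log\Delta(C))$ time. For each $q$ one tests $Aq \le b$ in time $\poly(\size(A),\log||b||_\infty)$ and keeps the largest value $c^\top q$ (reporting infeasibility if no candidate survives). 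The total cost is $\Delta(C)\cdot\poly(n,\log\Delta(C),\size(A),\log||b||_\infty,\log||c||_\infty)$, which already achieves the stated bound, since $n^{2\log\Delta(C)} = \Delta(C)^{2\log n} \ge \Delta(C)$ for $n \ge 2$.

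I expect the conceptual crux to be the monotonicity reduction itself, i.e. translating \eqref{ConeCond1}--\eqref{ConeCond3} into the clean sign conditions $c^\top C \le 0$ and $AC \ge 0$ and reading off that deeper cone points are worse for the objective and for feasibility at once; once this is in place the rest is routine. The main technical obstacle will be the bit-complexity bookkeeping of the enumeration: one must argue that the Smith normal form, the representatives $q$, and the intermediate quantity $C^{-1}p$ are all computed in polynomial time and have polynomially bounded encoding length in $\size(C)$, so that the per-candidate work genuinely stays polynomial and the overall estimate holds.
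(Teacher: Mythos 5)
Your proposal is correct, but it does not follow the route behind the stated lemma: the paper does not prove Lemma~\ref{ConeProgLmOld} at all, it imports it from \cite{GRIBC16}, where the algorithm is based on a recursive \emph{unimodular decomposition} of the cone $p+\cone(C)$ into roughly $n^{2\log\Delta(C)}$ unimodular subcones, on each of which the integer program is solved trivially by rounding --- that decomposition is precisely where the $n^{2\log\Delta(C)}$ factor comes from. What you do instead --- use conditions \eqref{ConeCond1}--\eqref{ConeCond3} to show that moving deeper into the cone simultaneously hurts the objective (since $c^\top C\le 0$) and feasibility (since $AC\ge 0_{m\times n}$), so that an optimum, if one exists, lies among the $|\det C|$ integer points of $p+\parl(C)$ --- is exactly the improvement the paper itself develops immediately afterwards in Lemmas~\ref{ParEnumLm}, \ref{EnumComplx} and \ref{ConeProgLm}, where the authors remark that the unimodular decomposition is ``very redundant.'' Your argument therefore proves a strictly stronger statement, with cost $\Delta(C)\cdot\poly(\cdot)$ rather than $n^{2\log\Delta(C)}\cdot\poly(\cdot)$, and your observation that $n^{2\log\Delta(C)}=\Delta(C)^{2\log n}\ge\Delta(C)$ correctly closes the gap to the stated bound. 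The only cosmetic difference from the paper's own version of this idea is the enumeration mechanism: you list the $\Delta(C)$ cosets of $\Lambda(C)$ via the Smith normal form and reduce each representative into $p+\parl(C)$, while the paper enumerates the integer points of $p+\parl(C)$ directly through the Hermite normal form; both are polynomial per candidate, so either works.
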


The main idea of the algorithm is the unimodular decomposition procedure from \cite{GRIBC16}. Actually, the technique based on the unimodular decomposition is very redundant, and it is better to use a simple procedure of enumerating integer points in some rational $n$-dimensional parallelepiped.

The following lemma (and the corresponding proof) is required to estimate the complexity of the enumeration procedure.

\begin{lemma}\label{ParEnumLm}
Let $A \in \mathbb{Q}^{n \times n}$, $p \in \mathbb{Q}^n$, $|\det(A)| = \Delta > 0$, and $M = p + \parl(A)$. Let, additionally, $A = Q H$, where $Q \in \mathbb{Z}^{n \times n}$ is an unimodular matrix and $H^\top$ is the HNF for $A^\top$ of the form \eqref{HNF}.

Then
\begin{equation}\label{ParNum}
\prod_{i = 1}^{n} \lfloor H_{i\,i} \rfloor \leq |M \cap \mathbb{Z}^n| \leq \prod_{i = 1}^{n} \lceil H_{i\,i} \rceil.
\end{equation}
\end{lemma}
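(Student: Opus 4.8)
The plan is to first use the unimodularity of $Q$ to strip it away and reduce to counting integer points in a triangular parallelepiped, and then to count those points one coordinate slab at a time by an induction on the dimension. The one elementary fact I will lean on throughout is that for any real $a$ and any real $L > 0$, the number of integers in the half-open interval $[a,\,a+L)$ lies between $\lfloor L \rfloor$ and $\lceil L \rceil$ (indeed it equals $\lceil a+L\rceil - \lceil a\rceil \in \{\lfloor L\rfloor,\lceil L\rceil\}$).

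First I would reduce to the triangular case. Since $Q$ is unimodular, $Q^{-1}$ is an integer matrix and hence a bijection of $\mathbb{Z}^n$ onto itself. From $A = QH$ we get $\parl(A) = Q\,\parl(H)$, so $M = p + Q\,\parl(H)$ and therefore $Q^{-1} M = p' + \parl(H)$ with $p' = Q^{-1} p$. Because $Q^{-1}$ bijects $\mathbb{Z}^n$, it follows that $|M \cap \mathbb{Z}^n| = |(p' + \parl(H)) \cap \mathbb{Z}^n|$. This reduces the lemma to bounding the number of integer points in $p' + \parl(H)$, where, since $H^\top$ has the lower-triangular form \eqref{HNF}, the matrix $H$ is upper triangular with the same diagonal entries $H_{i\,i}$; these are strictly positive, as $\prod_{i=1}^n H_{i\,i} = |\det H| = |\det A| = \Delta > 0$.

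The heart of the argument is an induction on $n$, stated in the uniform form: for every upper-triangular $H \in \mathbb{Q}^{n\times n}$ with positive diagonal and every offset $q \in \mathbb{Q}^n$, the count $|(q + \parl(H)) \cap \mathbb{Z}^n|$ lies between $\prod_{i=1}^n \lfloor H_{i\,i}\rfloor$ and $\prod_{i=1}^n \lceil H_{i\,i}\rceil$. Writing $y = q + H t$ with $t \in [0,1)^n$, upper-triangularity makes the last row depend only on $t_n$, so $y_n = q_n + H_{n\,n}\,t_n$ ranges over $[q_n,\,q_n + H_{n\,n})$; by the interval fact there are between $\lfloor H_{n\,n}\rfloor$ and $\lceil H_{n\,n}\rceil$ admissible integer values of $y_n$, each determining $t_n \in [0,1)$ uniquely. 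Fixing such a $y_n$, the remaining coordinates satisfy $y_{1:n-1} = q' + H_{1:n-1\,1:n-1}\,t_{1:n-1}$, where $q'$ absorbs the now-constant terms $H_{i\,n}\,t_n$, and $H_{1:n-1\,1:n-1}$ is again upper triangular with diagonal $H_{1\,1},\dots,H_{n-1\,n-1}$. The induction hypothesis bounds this inner count between $\prod_{i=1}^{n-1} \lfloor H_{i\,i}\rfloor$ and $\prod_{i=1}^{n-1} \lceil H_{i\,i}\rceil$, \emph{uniformly in the offset} $q'$. Summing over the admissible $y_n$ then gives $|(q+\parl(H))\cap\mathbb{Z}^n| \geq \lfloor H_{n\,n}\rfloor \cdot \prod_{i=1}^{n-1}\lfloor H_{i\,i}\rfloor$ and, symmetrically, the ceiling product as an upper bound, completing the induction and hence the lemma via the reduction above.

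The step I expect to require the most care — and the only genuinely delicate point — is that one cannot simply assert that the total count equals the product of the per-slab counts: the number of integers in each coordinate slab depends on the fractional part of the offset inherited from the already-fixed higher coordinates, and this fractional part varies. Packaging the count as an induction that, at every level, uses only the offset-independent two-sided estimate $\lfloor H_{i\,i}\rfloor \leq (\text{slab count}) \leq \lceil H_{i\,i}\rceil$ is exactly what sidesteps this dependence and lets the two one-sided product bounds go through cleanly.
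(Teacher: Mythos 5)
Your proposal is correct and follows essentially the same route as the paper: strip off the unimodular factor $Q$ to reduce to the triangular parallelepiped $p'+\parl(H)$, then count integer points coordinate by coordinate from the last one down, using that each half-open interval of length $H_{i\,i}$ contains between $\lfloor H_{i\,i}\rfloor$ and $\lceil H_{i\,i}\rceil$ integers regardless of the offset inherited from the already-fixed coordinates. Your packaging of this as a clean induction with an offset-uniform hypothesis is a slightly tidier presentation of exactly the argument the paper gives via its explicit sets $S_s$.
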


\begin{proof} After the unimodular map $x \to Q^{-1} x$ the set $M$ becomes $M = r + \{ x \in \mathbb{R}^n : x = H t,\, t \in [0,1)^n \}$, where $r = Q p$. Let $y \in M \cap \mathbb{Z}^n$, then

\[
y_n = r_n + H_{n\,n} t_n, \qquad t_n = \frac{y_n - r_n}{H_{n\,n}},
\]

\[
t_n \in S_n =\{ \frac{\lceil r_n \rceil - r_n}{H_{n\,n}},\, \frac{\lceil r_n \rceil - r_n+1}{H_{n\,n}},\, \dots,\, \frac{ \lceil r_n \rceil - r_n + \lfloor H_{n\,n} \rfloor }{H_{n\,n}} \}.
\]

If $\lceil r_n \rceil - r_n \geq \{H_{n\,n}\}$, then the last element must be deleted from the set $S_n$, and $\lfloor H_{n\,n} \rfloor \leq |S_n| \leq \lceil H_{n\,n} \rceil$. Let $s = n - k$, for $k \in 1:n$. Then
\[
y_s = H_{s\,s} t_s + \tau_s, \qquad t_s = \frac{y_s - \tau_s}{H_{s\,s}},
\] where $\tau_s = r_s + \sum_{i=1}^{k} H_{s\,s+i} t_{s+ i}$. Finally, we have:
\[
t_s \in S_s = \{ \frac{\lceil \tau_s \rceil - \tau_s}{H_{s\,s}},\, \frac{\lceil \tau_s \rceil - \tau_s + 1}{H_{s\,s}},\,\dots,\, \frac{\lceil \tau_s \rceil - \tau_s + \lfloor H_{s\,s} \rfloor}{H_{s\,s}} \}.
\]

If $\lceil \tau_s \rceil - \tau_s \geq \{H_{s\,s}\}$, then the last element must be deleted from the set $S_s$, and $\lfloor H_{s\,s} \rfloor \leq |S_s| \leq \lceil H_{s\,s} \rceil$.
\end{proof}

%\begin{note}
%The left inequality of \eqref{ParNum} becomes equality for the matrix
%\[
%H = \begin{pmatrix}
%3/2 & 1 \\
%0 & 3/2
%\end{pmatrix}
%\] and the right inequality becomes equality for the matrix
%\[
%H = \begin{pmatrix}
%4/3 & 1 \\
%0 & 3/2
%\end{pmatrix}.
%\]
%\end{note}

\begin{lemma}\label{EnumComplx}
Let $A$ be the integral $n \times n$ matrix, $p \in \mathbb{Q}^n$, $|\det(A)| = \Delta > 0$. Then there is an algorithm with a complexity of $$O(\log \Delta \cdot n \Delta \cdot \mult(n \size(p) + \size(A) + n \log \Delta) + T_H(A))$$ to enumerate all integer points of the set $M = p + \parl(A)$, where $T_H(\cdot)$ is the HNF computational complexity.
\end{lemma}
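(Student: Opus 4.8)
The plan is to reduce the enumeration over $M = p + \parl(A)$ to the triangular enumeration already described in the proof of Lemma~\ref{ParEnumLm}, and then to bound its cost using the sparsity of the form \eqref{HNF}. First I would compute the HNF of $A^\top$, obtaining $A = QH$ with $Q \in \mathbb{Z}^{n \times n}$ unimodular and $H^\top$ of the form \eqref{HNF}; this costs $T_H(A)$ and produces an integral, upper-triangular $H$ whose diagonal entries are positive integers with $\prod_{i=1}^n H_{i\,i} = |\det(H)| = |\det(A)| = \Delta$, and whose number $k$ of diagonal entries exceeding $1$ satisfies $k \leq \log_2 \Delta$ by property 2) of the HNF. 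Applying the bijection $x \mapsto Q^{-1} x$ of $\mathbb{Z}^n$ onto itself turns the task into enumerating $\mathbb{Z}^n \cap (r + \parl(H))$, exactly as in Lemma~\ref{ParEnumLm}. Since $H$ is integral, every $H_{i\,i}$ is an integer, so $\lfloor H_{i\,i} \rfloor = \lceil H_{i\,i} \rceil$ and the two sides of \eqref{ParNum} coincide; hence the set to be enumerated contains exactly $\Delta$ points.

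Next I would run the coordinate-by-coordinate procedure implicit in the proof of Lemma~\ref{ParEnumLm}: processing the coordinates from $s = n$ down to $s = 1$, at each step I compute $\tau_s = r_s + \sum_{i > s} H_{s\,i}\, t_i$ from the already fixed later coordinates and branch over the $H_{s\,s}$ admissible integer values of $y_s$, recovering $t_s = (y_s - \tau_s)/H_{s\,s}$. The key complexity observation is that, by the shape \eqref{HNF}, each row of $H$ has at most $k + 1 \leq \log_2 \Delta + 1$ nonzero entries (the identity block contributes only its diagonal, and all remaining nonzeros sit in the $k$ nontrivial columns); therefore each $\tau_s$ is computed with $O(\log \Delta)$ arithmetic operations. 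Counting the nodes of the resulting search tree, the number of partial assignments of $y_n, \dots, y_{s+1}$ equals $\prod_{j > s} H_{j\,j} \leq \Delta$, so summing over the $n$ levels bounds the total number of visited nodes by $n \Delta$; with $O(\log \Delta)$ work per node this gives $O(n \Delta \log \Delta)$ arithmetic operations, and finally mapping each output back by $x = Q y$ stays within the same budget.

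The remaining work, and the part I expect to be the most delicate, is the bit-size bookkeeping that turns the operation count into the stated $\mult(\cdot)$ factor. Here I would invoke the standard size bounds for the HNF together with Lemma~\ref{HNFElem} to get $\|H\|_{\max} \leq \Delta$ (so each entry has size $O(\log \Delta)$), and the standard bounds on the unimodular multiplier $Q$ to control $r$, whose entries have size $O(n\,\size(p) + \size(A))$; the rationals $\tau_s$ and $t_s$ accumulate denominators dividing products of diagonal entries, hence of size $O(\log \Delta)$ per nontrivial coordinate and $O(n \log \Delta)$ overall. Combining these, every operand has size $O(n\,\size(p) + \size(A) + n \log \Delta)$, and multiplying the $O(n \Delta \log \Delta)$ operation count by $\mult$ of this size, plus the $T_H(A)$ term for the initial reduction, yields the claimed bound. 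The conceptual crux is entirely in the second paragraph---exploiting that only $k \leq \log_2 \Delta$ coordinates branch and only $k$ entries per row are nontrivial---while the size analysis is routine once these structural facts are in place.
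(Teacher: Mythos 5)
Your proposal is correct and follows essentially the same route as the paper's own proof: reduce to the triangular enumeration of Lemma~\ref{ParEnumLm} via the HNF and the unimodular map, exploit that only $k\leq\log_2\Delta$ diagonal entries branch and each row has $O(\log\Delta)$ nontrivial off-diagonal entries to get the $O(n\Delta\log\Delta)$ operation count, and bound operand sizes by $O(n\,\size(p)+\size(A)+n\log\Delta)$ using Lemma~\ref{HNFElem} and Cramer-type estimates. The only cosmetic difference is that you count search-tree nodes level by level where the paper writes the sum $O\bigl(\sum_{i=0}^{k} i\prod_{j=n-i}^{n}H_{j\,j}\bigr)$ plus an $O(\Delta(n-k)k)$ term, which is the same accounting.
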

\begin{proof} The proof of previous Lemma \ref{ParEnumLm} contains the enumeration algorithm, so we need only to estimate its complexity. Let $A = Q H$ and $r = Q p$ as in the proof of Lemma \ref{ParEnumLm}. Since $Q = A H^{-1}$, by Lemma \ref{SubRankDet}, we have $\size(r) = O(n \log \Delta + n \size(p) + \size(A))$.

Since $|y_i| \leq |H_{i\,*} t| \leq i |H_{i\,i}|$, we have $\size(y) = O(n \log n + \log \Delta)$ and $\size(y-r) = O(\size(r)+n \log n+\log \Delta) = O(\size(A) + n \size(p) + n \log \Delta)$.

Let $H^\prime$ be the matrix obtained from $H$ by replacing $j$-th column with column $y-r$. By Lemma \ref{SubRankDet}, we have $\size(\det H^\prime) = O(n \size(p) + \size(A) + n \log \Delta)$. Since $t_j = \frac{\det(H^\prime)}{\det(H)}$, we have $\size(t_j) = O(n \log \Delta + n \size(p) + \size(A))$, for any $j \in 1:n$.

Let $k$ be the number of diagonal elements of $H$ that are not equal to $1$, and $s = n - k$. Due to the proof of Lemma \ref{ParEnumLm}, we need
\[
O\Bigl(\sum_{i = 0}^{k} i \prod_{j=n-i}^n H_{j\,j}\Bigr) = O(\Delta k^2)
\] arithmetic operations to determine all possible values of the variables $y_i$ and $\tau_i$, for any $i \in (s+1):n$. When the values of $y_i$ have already been determined, for any $i \in (s+1):n$, then we can determine values of $\tau_i$ and $y_i = \lceil \tau_i \rceil$, for any $i \in 1:s$. The number of arithmetic operations for the last observation is $O(\Delta s k) = O(\Delta (n-k)k)$. Totally, we have
\[
O(\Delta k^2 + \Delta (n-k)k) = O(\log \Delta \cdot \Delta n)
\] arithmetic operations with values of a size of $O(n \size(p) + \size(A) + n \log \Delta)$. So, the total complexity becomes $O(\log \Delta \cdot n \Delta \cdot \mult(n \size(p) + \size(A) + n \log \Delta))$.
\end{proof}

Now, we can give a simple algorithm to determine the feasibility of the problem \eqref{ConeProg}.

\begin{lemma}\label{ConeProgLm}
There is an algorithm with a complexity of
\[
O(\Delta \cdot n^2 \cdot \mult(n \size(p) + \size(C) + \log ||A||_{\max} + n \log \Delta) + \Delta \size(b) + T_H(C))
\] to determine the feasibility of the problem \eqref{ConeProg}, where $\Delta = |\det(C)|$ and $m = O(n)$.
\end{lemma}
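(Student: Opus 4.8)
The plan is to pass to the coordinates of the cone, reduce integer feasibility to the inspection of a single ``base layer'' of lattice points, and then enumerate that layer with Lemma \ref{EnumComplx}. First I would substitute $x = p + C t$, so that membership $x \in p + \cone(C)$ is exactly $t = C^{-1}(x - p) \geq 0$. Every integer point $x \in (p + \cone(C)) \cap \mathbb{Z}^n$ then admits a unique decomposition $x = w + C z$ with $z = \lfloor t \rfloor \in \mathbb{Z}_+^n$ and $w = x - C z = p + C \{t\}$, where $\{t\} \in [0,1)^n$ is the componentwise fractional part. Since $C$ is integral and $x \in \mathbb{Z}^n$, the point $w$ is again integral and lies in $(p + \parl(C)) \cap \mathbb{Z}^n$; conversely $\parl(C) \subseteq \cone(C)$, so each such $w$ is itself a feasible point of the cone. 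By Lemma \ref{ParEnumLm} applied to $C$, together with the fact that the HNF diagonal of an integral matrix is integral, the set $(p + \parl(C)) \cap \mathbb{Z}^n$ contains exactly $\prod_{i} H_{i\,i} = \Delta$ points, independently of $p$.

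The key step is a monotonicity argument that collapses the whole cone onto this base layer. Using condition \eqref{ConeCond3}, namely $A C \geq 0_{m \times n}$, and $z \geq 0$, we get $A (C z) \geq 0$, hence for $x = w + C z$
\[
A w = A x - (A C) z \leq A x.
\]
Consequently, if some integer point $x$ of the cone satisfies $A x \leq b$, then its base point $w$ already satisfies $A w \leq b$. Combined with the converse observation above, this yields the equivalence: problem \eqref{ConeProg} is feasible if and only if at least one of the $\Delta$ points $w \in (p + \parl(C)) \cap \mathbb{Z}^n$ satisfies $A w \leq b$. Note that the objective $c$ plays no role in feasibility, so only the inequality $A C \geq 0$ from the equivalent conditions \eqref{ConeCond1}--\eqref{ConeCond3} is needed.

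With the equivalence in hand the algorithm is immediate. I would compute the HNF of $C^\top$ (cost $T_H(C)$) and enumerate the $\Delta$ base points via Lemma \ref{EnumComplx}, whose enumerated points have size $O(n\,\size(p) + \size(C) + n \log \Delta)$ and whose enumeration costs $O(\log \Delta \cdot n \Delta \cdot \mult(n\,\size(p) + \size(C) + n \log \Delta))$. For each of the $\Delta$ points I would form the product $A w$ ($m = O(n)$ rows, $n$ terms each, i.e. $O(n^2)$ multiplications of size $\mult(n\,\size(p) + \size(C) + \log ||A||_{\max} + n \log \Delta)$) and test $A w \leq b$. Summing the three contributions — the enumeration, the $O(\Delta n^2)$ multiplications for the products $A w$ (which dominate the enumeration), and the $O(\Delta\,\size(b))$ spent on the comparisons against $b$ — gives the claimed bound
\[
O(\Delta \cdot n^2 \cdot \mult(n\,\size(p) + \size(C) + \log ||A||_{\max} + n \log \Delta) + \Delta\,\size(b) + T_H(C)).
\]

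The main obstacle is precisely the monotonicity reduction of the second paragraph: one must recognize that the cone conditions \eqref{ConeCond1}--\eqref{ConeCond3} are exactly what makes each constraint $A_{i\,*} x \leq b_i$ nondecreasing along every generator of $\cone(C)$, so that feasibility propagates downward to the apex parallelepiped and need never be checked deeper in the cone. Everything else is routine bookkeeping: the exact integer-point count in the translated parallelepiped is supplied by Lemma \ref{ParEnumLm}, and the size estimates for the enumerated points and the matrix--vector products follow from Lemma \ref{EnumComplx} (which in turn rests on Lemma \ref{SubRankDet}).
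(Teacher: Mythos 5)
Your proposal is correct and follows essentially the same route as the paper's proof: both decompose the cone as $p + \cone(C) = \bigcup_{z \in \mathbb{Z}^n_+} (p + C z + \parl(C))$, use the condition $A C \geq 0_{m \times n}$ from \eqref{ConeCond3} to push any feasible integer point down to the base parallelepiped $p + \parl(C)$, and then enumerate that parallelepiped via Lemma \ref{EnumComplx} and test each point against $A x \leq b$. The only (harmless) difference is that the paper also invokes $c^\top C \leq 0$ to place an \emph{optimal} point in the base layer, whereas you correctly observe that for the feasibility question only the $A C \geq 0$ half is needed.
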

\begin{proof}
Let us show that the set $p + \parl(C)$ contain an optimal point of the problem \eqref{ConeProg}, if the set of feasible integer points is not empty. Let us consider the following decomposition:
\[
p + \cone(C) = \bigcup\limits_{z \in \mathbb{Z}^n_+} (p + C z + \parl(C)).
\]
For the purpose of deriving a contradiction, assume that the set $p + \parl(C)$ contains no optimal points. Let $x^*$ be an optimal point of the problem and $x^* \in p + C z + \parl(C)$, for $z \not= 0$. Then we have $y \in p + \parl(C)$, for the  point $y = x^* - C z$. By the condition \eqref{ConeCond3}, we have $c^\top C \leq 0$ and $A C \geq 0_{m \times n}$. Since $A C \geq 0_{m \times n}$ and $x^* \in P(A,b)$, we have  $y \in P(A,b)$. Since $c^\top C \leq 0$, we have $c^\top y \geq c^\top x^*$. The last two statements provide the contradiction.

Finally, we can use Lemma \ref{EnumComplx} to find an optimal point in the set $p + \parl(C)$. Each point $x \in p + \parl(C)$ must be checked by the condition $x \in P(A,b)$. The total complexity of the checking procedure is $$O(\Delta \cdot n m \cdot \mult(\log ||A||_{\max} + \log \Delta) + \Delta \size(b) ).$$
\end{proof}

It was shown in \cite{GRIBC16} (cf. Theorem 8 and Lemmas 4,5) that the width computation problem for the simplex $P(H,b)$ is equivalent to $O(n^2)$ feasibility problems of the following type:
\begin{equation}\label{NormProb}
(p^{(i)} + \cone(C)) \cap (q^{(i)} - \cone(C)) \cap \mathbb{Z}^{n-1},
\end{equation} where $p^{(i)},q^{(i)} \in \mathbb{Q}^{n-1}$, for $i \in 1:\gamma$, $C \in \mathbb{Z}^{(n-1)\times(n-1)}$ and
\begin{align}
&\gamma = O(n \Delta(H,b) \Delta(H)),\label{GammaTag}\\
&|\det(C)| \leq \Delta_{n-1}(H).\label{DetCTag}
\end{align}
The sizes, for $p^{(i)}$, $g^{(i)}$, and $C$, satisfy the following formulae:
\begin{align}
1)\,& \size(p^{(i)}) = O(n \log n + n \log \Delta(H,b)),\label{SizePQTag}\\
2)\,& \text{the same relation is true for }\size(q^{(i)}),\notag\\
3)\,& ||C||_{\max} \leq n\Delta^4(H,b),\label{SizeCTag}\\
4)\,& \size(C) = O(n^2 \log \Delta(H,b)).\notag
\end{align}

Now, we can prove the main result of the section.

\begin{theorem}
Let $H$ be an $(n+1) \times n$ integral matrix of the rank $n$ that have already been reduced to the HNF. Let $P(H,b)$ be a simplex, for $b \in \mathbb{Z}^{n+1}$, $\Delta = \Delta(H)$, and $\Delta(H,b)$ be the maximum absolute value of $n \times n$ minors of the augmented matrix $(H\,b)$.

The problem to compute $\width(P(H,b))$ and a flat direction of $P(H,b)$ can be solved by an algorithm with a complexity of
\[
O( \log \Delta \cdot n^5 \cdot \Delta^3 \cdot \Delta(H,b) \cdot \mult(n^3 \log \Delta(H,b) + n^3 \log n)).
\]
\end{theorem}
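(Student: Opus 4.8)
The plan is to combine the reduction of the width problem to the feasibility instances \eqref{NormProb}, which is already supplied by \cite{GRIBC16}, with the fast feasibility subroutine of Lemma~\ref{ConeProgLm}. By \cite{GRIBC16} (Theorem~8 and Lemmas~4,5), computing $\width(P(H,b))$ together with a flat direction splits into $O(n^2)$ groups, each of which is a disjunction of $\gamma$ feasibility questions of type \eqref{NormProb}, with the parameter bounds \eqref{GammaTag}--\eqref{SizeCTag}. Hence the whole computation reduces to $O(n^2\gamma)=O\bigl(n^3\,\Delta\,\Delta(H,b)\bigr)$ instances of \eqref{NormProb}, and it remains to bound the cost of one instance and multiply. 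A flat direction is then read off from the first group that reports feasibility.

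First I would convert a single instance $(p^{(i)}+\cone(C))\cap(q^{(i)}-\cone(C))\cap\mathbb{Z}^{n-1}$ of \eqref{NormProb} into a feasibility instance of \eqref{ConeProg}. After permuting two columns of $C$ we may assume $\det(C)>0$, which does not change $\cone(C)$ as a set. I keep the apex $p=p^{(i)}$ and the generator $C$, take the trivial objective $c=0$, and encode the second cone as the polyhedron $P(A,b)$ with $A=C^{*}$ and $b=C^{*}q^{(i)}$, where $C^{*}=\det(C)\,C^{-1}$ is the adjugate. Indeed, $x\in q^{(i)}-\cone(C)$ is equivalent to $C^{-1}(q^{(i)}-x)\ge 0$, i.e.\ $C^{*}x\le C^{*}q^{(i)}$, so $P(A,b)$ is exactly $q^{(i)}-\cone(C)$ and the feasible set of \eqref{ConeProg} coincides with the set in \eqref{NormProb}. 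The cone conditions hold: $AC=C^{*}C=\det(C)\,I\ge 0_{(n-1)\times(n-1)}$ and $c^{\top}C=0\le 0$, which is \eqref{ConeCond3} (and likewise \eqref{ConeCond1}, \eqref{ConeCond2}), so Lemma~\ref{ConeProgLm} applies, here in ambient dimension $n-1$ with $m=n-1=O(n)$ rows.

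Next I would substitute the available bounds into the cost of Lemma~\ref{ConeProgLm}. By \eqref{DetCTag} and Lemma~\ref{SubRankDet}, $|\det(C)|\le\Delta_{n-1}(H)\le\frac{\Delta^{2}}{2}(1+\log_2\Delta)=O(\Delta^{2}\log\Delta)$. The quantities feeding the $\mult(\cdot)$ term, namely $n\,\size(p)$, $\size(C)$, $\log\|A\|_{\max}$ (the adjugate entries are $(n-2)$-minors of $C$, bounded via \eqref{SizeCTag} and the Hadamard inequality) and $n\log|\det(C)|$, are all $O\bigl(n^{3}\log\Delta(H,b)+n^{3}\log n\bigr)$ once \eqref{SizePQTag}--\eqref{SizeCTag} are inserted; the additive term $\Delta\,\size(b)$ with $b=C^{*}q^{(i)}$ is of the same order, and the precomputation $T_H(C)$ is dominated. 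Thus one instance costs $O\bigl(|\det(C)|\cdot n^{2}\cdot\mult(n^{3}\log\Delta(H,b)+n^{3}\log n)\bigr)=O\bigl(\Delta^{2}\log\Delta\cdot n^{2}\cdot\mult(\cdots)\bigr)$.

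Multiplying the per-instance cost by the number $O\bigl(n^{3}\Delta\,\Delta(H,b)\bigr)$ of instances yields
\[
O\bigl(n^{3}\Delta\,\Delta(H,b)\bigr)\cdot O\bigl(\Delta^{2}\log\Delta\cdot n^{2}\cdot\mult(\cdots)\bigr)=O\bigl(\log\Delta\cdot n^{5}\cdot\Delta^{3}\cdot\Delta(H,b)\cdot\mult(n^{3}\log\Delta(H,b)+n^{3}\log n)\bigr),
\]
which is the claimed bound. The main obstacle I expect is not the arithmetic but two bookkeeping points: faithfully inheriting the $O(n^2)$-times-$\gamma$ structure of \cite{GRIBC16}, and verifying that the converted data $(p,C,A,b,c)$ genuinely satisfies \eqref{ConeCond1}--\eqref{ConeCond3} so that Lemma~\ref{ConeProgLm} applies verbatim. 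Once these are secured, deriving the complexity is pure substitution.
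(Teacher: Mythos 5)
Your proposal is correct and follows essentially the same route as the paper: the same reduction to $O(n^2)\cdot\gamma$ feasibility instances of \eqref{NormProb}, the same rewriting $q^{(i)}-\cone(C)=P(C^*,C^*q^{(i)})$ to obtain an instance of \eqref{ConeProg}, the same application of Lemma~\ref{ConeProgLm} with $|\det(C)|\le\Delta_{n-1}(H)=O(\Delta^2\log\Delta)$, and the same final multiplication of counts. Your explicit verification of conditions \eqref{ConeCond1}--\eqref{ConeCond3} (with $c=0$ and a column swap to make $\det(C)>0$) is a detail the paper leaves implicit, but it does not change the argument.
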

\begin{proof}
Let $C^* = \det(C) C^{-1}$ be the adjoint matrix of $C$. Since $$q^{(i)} - \cone(C) = P(C^*,C^*q^{(i)}),$$ the problem \eqref{NormProb} is equivalent to the problem
\begin{equation}\label{NormProbDual}
(p^{(i)} + cone(C)) \cap P(C^*,C^* q^{(i)}) \cap \mathbb{Z}^{n-1}.
\end{equation}
By Lemma \ref{SubRankDet} and the estimates \eqref{SizePQTag}, \eqref{SizeCTag}, we have $$||C^*||_{\max} \leq \Delta^2_{n-1}(H) \log \Delta_{n-1}(H) \leq 3 \Delta^4 \log^3 \Delta,$$ $\size(C^*) = O(n^2 \log \Delta)$ and $$\size(C^*q^{(i)}) = O(n \log\Delta + n \size(q^{(i)})) = O(n^2 \log n + n^2 \log \Delta(H,b)).$$

Hence, by Lemma \ref{ConeProgLm}, the feasibility problem \eqref{NormProbDual} can be solved by an algorithm with a complexity of
\[
O(T_H(C) + \log \Delta \cdot n^2 \cdot \Delta^2 \cdot \mult(n^3 \log \Delta(H,b) + n^3 \log n)).
\]

Let us note that the computational complexity for computing $C^*$ is $O(T_H(C))$, so we did not include it to the formula. There are $\gamma = O(n \Delta(H,b) \Delta)$ (cf. \eqref{GammaTag}) problems of that type, for any $i \in 1:\gamma$. And we are need to compute the HNF only one time, for each $C$. Therefore, the complexity becomes:
\[
O(T_H(C) + \log \Delta \cdot n^3 \cdot \Delta^3 \cdot \Delta(H,b) \cdot \mult(n^3 \log \Delta(H,b) + n^3 \log n)).
\]
Due to \cite{STORH96}, $T_H(C) = O^{\sim}(n^{\Theta} \mult(n \log ||C||_{\max}))$, where $\Theta$ is the matrix multiplication exponent and the symbol $O^{\sim}$ means that we omit some logarithmic factor. Hence, we can eliminate $T_H(C)$ from the complexity estimation. The final complexity result can be obtained multiplying the last formula by $n^2$, since the problem is equivalent to $O(n^2)$ subproblems of the type \eqref{NormProb}.
\end{proof}

Due to \cite{GRIBC16} (cf. Theorem 9), if additionally the simplex $P(H,b)$ is empty, or in other words $P(H,b) \cap \mathbb{Z}^n = \emptyset$, then $\gamma \leq \Delta$ (cf. \eqref{GammaTag}). This fact gives us a possibility to avoid an exponential dependence on $\size(b)$.

\begin{theorem}
If $P(H,b) \cap \mathbb{Z}^n = \emptyset$, then the problem to compute $\width(P(H,b))$ and a flat direction of $P(H,b)$ can be solved by an algorithm with a complexity of
\[
O( \log \Delta \cdot n^4 \cdot \Delta^4 \cdot \mult(n^3 \log \Delta(H,b) + n^3 \log n) ).
\]
\end{theorem}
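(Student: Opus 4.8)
The plan is to reuse, almost verbatim, the argument of the preceding theorem and to change only the bound on the number $\gamma$ of feasibility subproblems. The single new ingredient is Theorem~9 of \cite{GRIBC16}: when the simplex is empty, i.e. $P(H,b) \cap \mathbb{Z}^n = \emptyset$, the quantity $\gamma$ from \eqref{GammaTag} drops from $O(n\,\Delta(H,b)\,\Delta)$ down to $\gamma \leq \Delta$. Nothing else in the reduction of the width computation to feasibility problems of type \eqref{NormProb} changes; in particular, the width problem is still equivalent to $O(n^2)$ families of such subproblems, and the size estimates \eqref{SizePQTag}--\eqref{SizeCTag} together with the determinant bound \eqref{DetCTag} continue to hold unaltered.

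First I would recall the per-subproblem cost already established inside the previous proof. Rewriting each problem \eqref{NormProb} in its dual form \eqref{NormProbDual} and bounding $\|C^*\|_{\max}$, $\size(C^*)$, and $\size(C^* q^{(i)})$ via Lemma \ref{SubRankDet} and the estimates \eqref{SizePQTag}, \eqref{SizeCTag}, one application of Lemma \ref{ConeProgLm} solves a single subproblem \eqref{NormProbDual} in time
\[
O\bigl(T_H(C) + \log \Delta \cdot n^2 \cdot \Delta^2 \cdot \mult(n^3 \log \Delta(H,b) + n^3 \log n)\bigr),
\]
where the factor $n^2 \Delta^2$ already absorbs $|\det(C)| \leq \Delta_{n-1}(H) = O(\Delta^2 \log \Delta)$ through Lemma \ref{SubRankDet}, and where, by the fast Hermite-normal-form algorithm of \cite{STORH96}, the term $T_H(C)$ is dominated and may be discarded. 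I emphasise that this estimate is independent of whether the simplex is empty, so it can be imported without modification.

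Then I would simply recount. Multiplying the per-subproblem cost by $\gamma \leq \Delta$ (in place of the previous $O(n\,\Delta(H,b)\,\Delta)$) and afterwards by the $n^2$ families of subproblems produces the asserted bound; the only structural change from the previous estimate is that the multiplicative factor $n\,\Delta(H,b)$ is collapsed into a single $\Delta$. This is exactly the payoff advertised before the statement: since $\Delta(H,b)$ can be as large as exponential in $\size(b)$, its appearance as a multiplicative factor would destroy polynomiality in the input, whereas after the substitution $\gamma \leq \Delta$ the dependence on $\Delta(H,b)$ survives only inside the argument of $\mult(\cdot)$ as $\log \Delta(H,b)$, which is polynomial in $\size(b)$ by the Hadamard bound on the minors of $(H\,b)$. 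Consequently the resulting algorithm is genuinely FPT in $\Delta$ and polynomial in $\size(H)$ and $\size(b)$. The only substantive step, on which the whole improvement rests, is the inequality $\gamma \leq \Delta$ for empty simplices, the cited result of \cite{GRIBC16}; once it is granted, the remaining work is mechanical bookkeeping identical in spirit to the previous proof, and the only mild care required is keeping the exponents of $n$ and $\Delta$ straight through the two multiplications by $\gamma$ and by $n^2$.
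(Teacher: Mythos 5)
Your proposal is correct and follows exactly the route the paper intends: the paper gives no explicit proof for this theorem, only the preceding remark that Theorem~9 of \cite{GRIBC16} yields $\gamma \leq \Delta$ for empty simplices, and your substitution of this bound into the previous theorem's accounting is precisely the intended argument. (Your bookkeeping in fact yields $\Delta^3$ rather than the stated $\Delta^4$, which is consistent with, and slightly sharper than, the claimed $O$-bound.)
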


\section*{Conclusion}
In Section 3, we presented FPT-algorithms for SLVP instances parameterized by the lattice determinant on lattices induced by near square matrices and on lattices induced by matrices without singular submatrices. Both algorithms can be applied to the $l_p$ norm, for any $p > 0$, and to the $l_\infty$ norm. In the future work, it could be interesting to develop FPT-algorithms for the SLVP for more general classes of norms defined by gauge functions $||\cdot||_K$, where $||x||_K = \inf\{s \geq 0: x \in s K\}$, $K$ is a convex body and $0 \in \inter(K)$.

In Section 4, we presented a FPT-algorithm for ILPP instances with near square constraints matrices parameterized by the maximum absolute value of rank minors of constraints matrices. Additionally, the last result gives us a FPT-algorithm for the case, when the ILPP constraints matrix has no singular rank submatrices, since these matrices can have only one additional row if the dimension is sufficiently large, due to \cite{AE16}. It is an interesting open problem to avoid the restriction for constraints matrices to be almost square and develop a FPT-algorithm for this case. It was mentioned in \cite{AW17} that the ILPP is NP-hard for values of parameter $\Delta = \Omega(n^\epsilon)$, for $\epsilon > 0$. So, the existence of a FPT-algorithm for the general class of matrices is unlikely.

In Section 5, we presented a FPT-algorithm for the simplex width computation problem parameterized by the maximum absolute value of rank minors of the augmented constraints matrix. The dependence on the augmented matrix minors can be avoided for empty lattice simplices. In the future work, it could be interesting to develop polynomial-algorithms or FPT-algorithms for wider types of polyhedra.

\section*{Acknowledgments}

\noindent

Results of Section 3 were obtained under financial support of Russian Science Foundation grant No 14-41-00039.

Results of Section 4 were obtained under financial support of Russian Science Foundation grant No 17-11-01336.

Results of Section 5 were obtained under financial support of Russian Foundation for Basic Research,
grant No 16-31-60008-mol-a-dk, and LATNA laboratory, NRU HSE.

\end{document}